\documentclass[12pt]{amsart}
\usepackage{graphicx}
\usepackage{mathrsfs}
\usepackage{epsfig}
\usepackage{amsmath}
\usepackage{amsfonts}
\usepackage{amssymb}
\usepackage{amssymb,amscd}
\usepackage{tikz}
\usepackage{verbatim}
\usepackage{booktabs}
\usepackage[all]{xy}
\usepackage{mathtools}
 \usepackage{makecell}

\usepackage{overpic}
\usepackage[hidelinks]{hyperref}
\usepackage[scr=boondox,  cal=esstix]{mathalpha}
\usepackage{adjustbox}

\usepackage{geometry}
\usepackage{cite}

\usepackage{amsthm}
\usepackage[shortlabels]{enumitem}
\usepackage{url} 
\usepackage[T1]{fontenc}
\usepackage{float}

\usepackage[utf8]{inputenc}
\usepackage[justification=centering]{caption}

\DeclareFontFamily{U}{mathx}{}
\DeclareFontShape{U}{mathx}{m}{n}{<-> mathx10}{}
\DeclareSymbolFont{mathx}{U}{mathx}{m}{n}
\DeclareMathAccent{\widehat}{0}{mathx}{"70}
\DeclareMathAccent{\widecheck}{0}{mathx}{"71}

\newtheorem{thm}{Theorem}[section]
\newtheorem{cor}[thm]{Corollary}
\newtheorem{lem}[thm]{Lemma}
\newtheorem{prop}[thm]{Proposition}

\theoremstyle{definition}
\newtheorem{defn}[thm]{Definition}
\theoremstyle{remark}
\newtheorem{rem}[thm]{Remark}
\numberwithin{equation}{section}

\newcommand{\HdC}{\mathbf H^{2}_{\mathbb C}}

\newcommand{\Pu}{\mbox{\rm PU(2,1)}}
\newcommand{\Su}{\mbox{\rm SU(2,1)}}

\renewcommand{\Re}{\mbox{\rm Re}\,}

\begin{document}
	
	\date{Oct. 24, 2026}

	\title[Delambre--Gauss formulas]{Delambre--Gauss Formulas\\in Complex Hyperbolic Space}
	\author[J. Ma]{Jiming Ma}
	\address{School of Mathematical Sciences, Fudan University, Shanghai, China}
	\email{majiming@fudan.edu.cn}
	
	\author[B. Xie]{Baohua Xie}
	\address{School of Mathematics, Hunan University, Changsha, China}
	\email{xiexbh@hnu.edu.cn}

	\keywords{Complex hyperbolic geometry,  right-angled hexagons,  Delambre-Gauss Formulas.}
	
		\subjclass[2010]{20H10, 51M09, 22E40, 51M10, 57M99.}
	
	
	\thanks{J. Ma was  supported by National Natural Science Foundation of China (No.12671086). B. Xie was supported  by Guangdong Basic and Applied Basic Research Foundation (No. 2025A1515011486).  }

\begin{abstract}
	We establish a Delambre--Gauss matrix identity for oriented right-angled
	hexagons in the complex hyperbolic plane.
	The six complex length parameters of a hexagon   determine two alternating \(3\times3\)
	matrices whose product is central in \(\mathrm{SU}(2,1)\).
	The resulting nine entry-wise equations extend the  Delambre-Gauss formula of
	Tan, Wong, and Zhang in 4-dimensional hyperbolic geometry to this complex
	hyperbolic setting.



\end{abstract}

\maketitle

\section{Introduction}

The geometry of right-angled hexagons in two- and three-dimensional hyperbolic
space is important for their applications in Fuchsian group and quasi-Fuchsian
theory, respectively.
For example, the Delambre--Gauss identities for right-angled hexagons in
3-dimensional hyperbolic space imply the sine and cosine laws for right-angled
hexagons, which can be specialized to various identities for triangles in
the hyperbolic plane \cite{TanWongZhang}.
In this paper, we study the geometry of right-angled hexagons in the complex
hyperbolic plane.
In particular, we study Delambre--Gauss identities for oriented right-angled
hexagons there, as an extension of the Tan--Wong--Zhang Delambre--Gauss
identities in 4-dimensional hyperbolic geometry to this complex hyperbolic
setting.
Our formulation uses the eigenvalues of side-preserving isometries and a
\(3\times3\) matrix closure identity.
The additional angle data in complex hyperbolic geometry already appear in the
trigonometry of triangles.
As in the case of the Delambre--Gauss identities in 4-dimensional hyperbolic
geometry, and motivated by representations of a free group into the isometry
group of 4-dimensional hyperbolic space, the authors also hope that the
Delambre--Gauss identities for right-angled hexagons in the complex hyperbolic
plane will benefit representations of a free group, and of surface groups, into
its isometry group.

 
\subsection{Triangle identities in  complex hyperbolic plane}
For the complex hyperbolic plane \(\mathbf{H}^2_{\mathbb{C}}\), we normalize
the holomorphic sectional curvature of it to \(-1\). We first consider the geometry
of triangles in \(\mathbf{H}^2_{\mathbb{C}}\), which is much more involved than
that in the hyperbolic plane \(\mathbf{H}^2_{\mathbb{R}}\).
For a noncollinear triangle in \(\mathbf{H}^2_{\mathbb{C}}\) with vertices
\(A,B,C\), let \(a,b,c\) be the opposite side lengths.
At vertex \(A\), write \(\theta_A\) for the Riemannian angle between the two
sides meeting at \(A\), \(\phi_A\) for the angle between the complex spans of
their tangent directions, and \(\mu_A\) for the angle of holomorphy of their
real span; use cyclic notation at the other vertices.
Then we have
\begin{equation}
\sin\phi_A=\sin\mu_A\sin\theta_A.
\label{eq:sin-phi-mu-theta}
\end{equation}
The definitions are recalled in Section~\ref{sec:angles}.
Goldman's two sine laws of a triangle in  \(\mathbf{H}^2_{\mathbb{C}}\) take the form~\cite[Chapter~3]{Goldman1999}
\begin{align}
\frac{\sinh(a/2)}{\sin\phi_A}
&=\frac{\sinh(b/2)}{\sin\phi_B}
=\frac{\sinh(c/2)}{\sin\phi_C},
\label{equation:firstsinelaw}\\
\frac{\sinh a}{\cos\mu_A\sin\theta_A}
&=\frac{\sinh b}{\cos\mu_B\sin\theta_B}
=\frac{\sinh c}{\cos\mu_C\sin\theta_C}.
\end{align}
The quotient expressions apply when their denominators are nonzero;
the cross-multiplied identities remain valid when a denominator vanishes.
On a totally real plane of the complex hyperbolic plane
\(\mathbf{H}^2_{\mathbb{C}}\), whose sectional curvature is \(-1/4\), the first
identity is the ordinary sine law with side lengths \(a/2,b/2,c/2\).
On a complex geodesic of \(\mathbf{H}^2_{\mathbb{C}}\), whose sectional
curvature is \(-1\), the second identity becomes
\begin{equation}\label{equation:sinelaw}
\frac{\sinh a}{\sin\theta_A}
=\frac{\sinh b}{\sin\theta_B}
=\frac{\sinh c}{\sin\theta_C}.
\end{equation}
The corresponding cosine law~\cite[Chapter~3]{Goldman1999} can be written
in either of the following equivalent forms:
\begin{align}
&\cosh^2(c/2)
-\bigl(\cosh(a/2)\cosh(b/2)
-\cos\theta_C\sinh(a/2)\sinh(b/2)\bigr)^2 \notag\\
&\hspace{18mm}
=\cos^2\mu_C\sin^2\theta_C\sinh^2(a/2)\sinh^2(b/2),
\label{eq:goldman-cosine-one}\\
&\cosh a\cosh b-\cos\theta_C\sinh a\sinh b-\cosh c \notag\\
&\hspace{18mm}
=2\sin^2\mu_C\sin^2\theta_C\sinh^2(a/2)\sinh^2(b/2).
\label{eq:goldman-cosine-two}
\end{align}
At a Riemannian right angle, \(\theta_C=\pi/2\), these give the
Pythagorean identities
\begin{align}
\cosh^2(c/2)
&=\cosh^2(a/2)\cosh^2(b/2)
+\cos^2\mu_C\sinh^2(a/2)\sinh^2(b/2),\\
\cosh c
&=\cosh a\cosh b-2\sin^2\mu_C\sinh^2(a/2)\sinh^2(b/2).
\end{align}
These formulas provide the triangular background for our hexagon
identities in complex  hyperbolic geometry: even at a Riemannian right angle, the complex angle remains
relevant. Therefore, the Delambre--Gauss identities for hexagons in complex  hyperbolic plane are inevitably more involved than those in the real hyperbolic setting.

\subsection{Delambre--Gauss identities for hexagons in real hyperbolic space}

Right-angled hexagons are basic building blocks in real hyperbolic
geometry and in the construction of Fenchel--Nielsen coordinates of hyperbolic surfaces;
see~\cite{Fenchel}. Complex length coordinates in the quasi-Fuchsian
setting were developed by Kourouniotis~\cite{Kourouniotis1994} and
Tan~\cite{Tan1994}. Related coordinate constructions in complex
hyperbolic geometry and for \(\mathrm{SL}(3,\mathbb C)\) representations
appear in~\cite{ParkerPlatis2008} and~\cite{FigueroaParker2025},
respectively.

For an oriented right-angled hexagon in real hyperbolic three-space,
write \(l_j\) for its signed complex side lengths in the convention of
\cite[Appendix~A]{TanWongZhang}.
The Delambre--Gauss formulas are
\begin{align}
\cosh\frac{1}{2}(l_1 + l_3) \cosh\frac{1}{2}l_2
&= \epsilon \cosh\frac{1}{2}(l_4 + l_6) \cosh\frac{1}{2}l_5, \\
-\sinh\frac{1}{2}(l_1 + l_3) \cosh\frac{1}{2}l_2
&= \epsilon \cosh\frac{1}{2}(l_4 - l_6) \sinh\frac{1}{2}l_5, \\
-\cosh\frac{1}{2}(l_1 - l_3) \sinh\frac{1}{2}l_2
&= \epsilon \sinh\frac{1}{2}(l_4 + l_6) \cosh\frac{1}{2}l_5, \\
\sinh\frac{1}{2}(l_1 - l_3) \sinh\frac{1}{2}l_2
&= \epsilon \sinh\frac{1}{2}(l_4 - l_6) \sinh\frac{1}{2}l_5.
\end{align}
Here \(\epsilon\in\{1,-1\}\) records the choices of lifts of the identity  from  \(\mathrm{PSL}(2,\mathbb C)\) to  \(\mathrm{SL}(2,\mathbb C)\).
These formulas can be recast in matrix form. Define
\(A_n \in \mathrm{SL}(2,\mathbb C)\), \(n=1,\ldots,6\), by
\begin{align*}
A_n&=\begin{pmatrix}
e^{l_n/2}&0\\
0&e^{-l_n/2}
\end{pmatrix}, \qquad n=1,3,5; \\
A_n&=\begin{pmatrix}
\cosh(l_n/2)&\sinh(l_n/2)\\
\sinh(l_n/2)&\cosh(l_n/2)
\end{pmatrix}, \qquad n=2,4,6.
\end{align*}
Then the Delambre--Gauss formulas follow from the matrix identity
\[
A_1A_2A_3=\epsilon A_6^{-1}A_5^{-1}A_4^{-1},
\]
where \(\epsilon^2=1\).

Tan, Wong, and Zhang also obtained Delambre--Gauss formulas for
augmented right-angled hexagons in real hyperbolic four-space
\cite{TanWongZhang}. Their proof follows the side isometries around the
hexagon and expresses closure as a matrix identity. This inspired our proof of the Delambre--Gauss identities in the complex hyperbolic plane.

\subsection{Delambre--Gauss identities in the complex hyperbolic plane}

In this paper, we consider right-angled hexagons \((\vec{L}_i)_{i=1}^6\) in
\(\mathbf{H}^2_{\mathbb{C}}\) such that the real oriented geodesics \(\vec{L}_i\) and
\(\vec{L}_{i+1}\), with indices taken modulo \(6\), meet at a Riemannian right angle,
and the angle between the complex spans of \(\vec{L}_i\) and \(\vec{L}_{i+1}\) is the
common complex angle \(\theta\). The moduli space of such right-angled
hexagons is the space \(\mathcal{H}^{\theta}\) in
Subsection~\ref{subsec:moduli}. Here \(\mathcal{H}^{0}\) is the well-studied space 
of right-angled hexagons in \(\mathbf{H}^2_{\mathbb{R}}\), 
while \(\mathcal{H}^{\pi/2}\) is the well-studied space of right-angled complex hexagons in \(\mathbf{H}^2_{\mathbb{C}}\); 
these spaces have dimensions \(3\) and \(4\), respectively.


Write
\(x_j=\vec{L}_j\cap \vec{L}_{j+1}\), with indices modulo six, and let \(u_j\)
be the positive unit tangent vector field on the oriented complete line \(\vec{L}_j\).
At the vertex $x_j$, orthogonality of  \(\vec{L}_j\) and \(\vec{L}_{j+1}\) implies that
\(\langle u_j,u_{j+1}\rangle_{x_j}\) is purely imaginary.
 Its modulus is
\(\cos\theta\), but its sign also matters. The side map preserving
\(\vec{L}_j\) and carrying oriented \(\vec{L}_{j-1}\) to oriented \(\vec{L}_{j+1}\)
requires consecutive Hermitian products to have opposite signs.
Accordingly, we use the following signed form of the angle hypothesis:
\begin{equation}\label{eq:signed-angle}
\langle u_j,u_{j+1}\rangle_{x_j}
=(-1)^{j-1} i  \cos\theta ,\qquad j=1,\ldots,6.
\end{equation}
Its role in the existence and uniqueness of the side maps is explained
in Lemma~\ref{thm:closure}.

\begin{thm}[Delambre--Gauss identity with an angle parameter]
\label{thm:main}
Let \((\vec{L}_1,\dots,\vec{L}_6) \in  \mathcal{H}^{\theta}\) be an oriented right-angled hexagon with
\(x_{j-1}\ne x_j\) for every \(j\), common complex angle
\( \theta \in (0, \pi/2]\), and orientations satisfying
\eqref{eq:signed-angle}. Let \(\lambda_j\in\mathbb C^*\) be the
complex length parameters of its side maps, defined in
Subsection~\ref{subsection:sideparameters}. Then
\begin{equation}\label{eq:transport-main}
\begin{split}
&A_1(\theta,\lambda_1)\,A_2(\theta,\lambda_2)\,A_3(\theta,\lambda_3)
\\&\quad = 
\varepsilon\,
A_6(\theta,\lambda_6^{-1})\,A_5(\theta,\lambda_5^{-1})\,A_4(\theta,\lambda_4^{-1}),
\qquad \varepsilon^3=1.
\end{split}
\end{equation}
Where \(A_i(\theta, \lambda^{\pm}_i)\) are \(3\times3\) complex matrices, and  then the nine scalar entries of \eqref{eq:transport-main} are the
Delambre--Gauss identities for oriented right-angled hexagons in
\(\mathbf H^2_{\mathbb C}\).
\end{thm}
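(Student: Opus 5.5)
The plan is to follow the Tan--Wong--Zhang strategy: express the closure of the hexagon as the statement that a product of six side isometries is the identity in \(\Pu\). We then lift this product to \(\Su\), where the identity has exactly three lifts \(\varepsilon I\) with \(\varepsilon^3=1\).

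\textbf{Step 1: normal form for consecutive sides.} Fix a normal form. Choose a basepoint configuration in which \(\vec L_1\) is a standard oriented real geodesic, for instance the real points of the complex line spanned by \(e_1,e_2\) in a Hermitian form of signature \((2,1)\). Put the vertex \(x_1\) at a standard point, and let \(\vec L_2\) be the oriented geodesic through \(x_1\) orthogonal to \(\vec L_1\), with \(\langle u_1,u_2\rangle_{x_1}=i\cos\theta\) as in \eqref{eq:signed-angle}. By Lemma~\ref{thm:closure}, for each \(j\) there is a unique side map \(S_j\in\Pu\) preserving \(\vec L_j\) and carrying oriented \(\vec L_{j-1}\) to oriented \(\vec L_{j+1}\). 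The alternating signs in \eqref{eq:signed-angle} are precisely what make this map exist.

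\textbf{Step 2: write each side map as a matrix.} Conjugate each \(S_j\) into the normal frame attached to \(\vec L_j\). There it is a loxodromic or boundary-elliptic element fixing the endpoints of the standard geodesic, with eigenvalues determined by \(\lambda_j\). Conjugating back by the standard rotation that takes the frame of \(\vec L_j\) to that of \(\vec L_{j+1}\) gives the matrices \(A_j(\theta,\lambda_j)\in\Su\). This rotation depends only on \(\theta\) and the sign \((-1)^{j-1}\). The two alternating shapes of \(A_j\), for odd and even \(j\), absorb that sign, exactly as the diagonal versus \(\cosh/\sinh\) blocks do in the real case.

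\textbf{Step 3: closure and rearrangement.} Walking once around the hexagon returns every frame to itself, so \(A_1A_2\cdots A_6\) acts trivially on \(\HdC\). It is therefore central in \(\Su\), that is, equal to \(\varepsilon I\) with \(\varepsilon^3=1\). To move the last three factors to the right-hand side, we need the inversion property \(A_j(\theta,\lambda)^{-1}=A_j(\theta,\lambda^{-1})\). This should follow from the explicit form, since inverting a loxodromic in the normal frame inverts its eigenvalue parameter while the frame-change factor is unaffected. Rearranging then yields \eqref{eq:transport-main}, and reading off the nine entries gives the identities.

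\textbf{Main obstacle.} The hardest step is Step 2, checking that the frame-change is the same fixed matrix at every vertex, up to the alternating sign. This requires a precise choice of lifts, normalized by determinant \(1\) and by fixing the Hermitian product of the chosen basis vectors at \(x_j\). It also uses the hypothesis \(x_{j-1}\ne x_j\), which guarantees that the sides are nondegenerate and that \(\lambda_j\) is well defined. My first attempt would be an explicit computation in the Siegel or ball model: parametrize the orthogonal pair \((\vec L_j,\vec L_{j+1})\) at the origin by \(\theta\), and verify directly that the inversion property holds and that the conjugating element is independent of \(j\) apart from the sign of \(i\cos\theta\).
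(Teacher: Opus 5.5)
Your outer structure matches the paper. Closure holds in \(\Pu\), it lifts to a central element \(\varepsilon I\) of \(\Su\), and the inversion rule \(A_n(\theta,\lambda)^{-1}=A_n(\theta,\lambda^{-1})\) lets you move three factors across. The inversion rule is immediate from \(\operatorname{diag}(\lambda,\bar\lambda/\lambda,1/\bar\lambda)^{-1}=\operatorname{diag}(\lambda^{-1},\lambda/\bar\lambda,\bar\lambda)\).

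The gap is Step~2, which is where the specific matrices of the statement must come from. In the statement, the odd \(A_n\) are exactly diagonal and the even \(A_n\) are exactly \(Q\operatorname{diag}(\cdot)Q^{-1}\). Each one stabilizes one of two \emph{fixed} oriented lines, \(\vec L_{[0,\infty]}\) or its image under \(Q\), and none carries a per-vertex rotation factor. Multiplying or conjugating the side maps by ``frame-change rotations'' does not produce these matrices unless you also prove that your product equals the closure composite written in one fixed coordinate system. That is exactly the vertex-independence you call the main obstacle and leave unverified. So the identification of the closure with \(A_1\cdots A_6\) is asserted, not proved.

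Step~3 also has a hidden step. ``The frame returns to itself, so the product acts trivially'' silently uses Lemma~\ref{lem:rigidity}. The composite \(\tau_6\cdots\tau_1\) preserves the oriented lines \(\vec L_6\) and \(\vec L_1\), and their complex spans are distinct only because \(\theta>0\). This is where the hypothesis \(\theta\neq0\) enters, and you should say so.

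The missing idea is to conjugate by partial products rather than by vertex rotations. Set \(\eta_n=(\tau_{n-1}\cdots\tau_1)^{-1}\tau_n(\tau_{n-1}\cdots\tau_1)\).
\begin{itemize}
\item Telescoping gives \(\eta_1\cdots\eta_n=\tau_n\cdots\tau_1\). So closure reads \(\eta_1\cdots\eta_6=\mathrm{id}\), with the factors already in the order of \eqref{eq:transport-main}.
\item The chains \(\vec L_1\to\vec L_1\to\vec L_3\to\vec L_3\to\vec L_5\to\vec L_5\) and \(\vec L_6\to\vec L_2\to\vec L_2\to\vec L_4\to\vec L_4\to\vec L_6\) show that \(\tau_{n-1}\cdots\tau_1\) carries oriented \(\vec L_1\) (odd \(n\)) or oriented \(\vec L_6\) (even \(n\)) onto \(\vec L_n\).
\item Hence \(\eta_n\) preserves \(\vec L_1\) or \(\vec L_6\). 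Being conjugate to \(\tau_n\) by an orientation-respecting holomorphic isometry, it has complex length parameter \(\lambda_n\).
\item Normalize \(\vec L_1=\vec L_{[0,\infty]}\) and \(\vec L_6=Q(\vec L_1)\) in the Siegel model. Then the odd \(\eta_n\) are automatically the diagonal matrices and the even ones their \(Q\)-conjugates.
\end{itemize}

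No frames at different vertices are ever compared. The only explicit check is that \(Q\) realizes the configuration at \(x_6\). This means the complex angle is \(\theta\), via \(|\langle\mathbf c_1,Q\mathbf c_1\rangle|=\cos\theta\), and the sign is \(\langle v_2,v_1\rangle=-i\cos\theta\), as \eqref{eq:signed-angle} demands for the ordered pair \((\vec L_6,\vec L_1)\). The alternating signs in \eqref{eq:signed-angle} are used only to guarantee that the \(\tau_n\) exist; they are not absorbed into the matrix shapes.

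Your choice of reference pair \((\vec L_1,\vec L_2)\) at \(x_1\) would be harmless. It yields the cyclic shift \(A_2\cdots A_6A_1\), which is conjugate to \(A_1\cdots A_6\).
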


Equation~\eqref{eq:transport-main} is a set of identities relating
the intrinsic geometric quantities \(\theta\) and \(\{\lambda_n\}^{6}_{n=1}\) of a
right-angled hexagon. Therefore, Theorem~\ref{thm:main} is a
geometric theorem about right-angled hexagons.

The matrices in Theorem \ref{thm:main} are
\[
A_n(\theta,\lambda)=
\begin{cases}
\operatorname{diag}(\lambda,\bar\lambda/\lambda,1/\bar\lambda),
   & n=1,3,5,\\
Q\,\operatorname{diag}(\lambda,\bar\lambda/\lambda,1/\bar\lambda)\,Q^{-1},
   & n=2,4,6,
\end{cases}
\]
where \(Q=Q(\theta)\) is displayed in \eqref{eq:Q}.
The scalar \(\varepsilon\) records the choices of lifts of the identity  from \(\Pu\) to \(\Su\).
The nine entry equations are not asserted to be independent.

Two features distinguish this formulation from its real hyperbolic
counterparts. First, in the \(2\times2\) setting of  three-dimensional  real geometry  a complex displacement
\(\ell+i\phi\) is represented by eigenvalues \(\lambda,\lambda^{-1}\),
where \(\lambda=e^{(\ell+i\phi)/2}\). The identities
\[
\lambda=\cosh\frac{\ell+i\phi}{2}+\sinh\frac{\ell+i\phi}{2},
\qquad
\lambda^{-1}=\cosh\frac{\ell+i\phi}{2}-\sinh\frac{\ell+i\phi}{2}
\]
lead to the usual hyperbolic sine and cosine expressions. In \(\Su\),
the corresponding eigenvalues are
\[
\lambda,\quad\bar\lambda/\lambda,\quad1/\bar\lambda,
\qquad \lambda=e^{\ell/2-i\phi/3};
\]
see~\cite{Parker2012}. Both \(\lambda\) and \(\bar\lambda\) therefore
enter the matrix identity, and eigenvalues provide convenient variables.

Second, the \(2\times2\) formulations in real hyperbolic dimensions two
and three use real and complex matrices, respectively, while the
four-dimensional formulation in~\cite{TanWongZhang} uses quaternionic
matrices. Each matrix equality has four entries. Our \(3\times3\)
equality has nine entries, but this is a distinction between matrix
representations, not a count of independent equations. The Lorentz
description of the real hyperbolic plane also uses \(3\times3\)
matrices; Section~\ref{sec:real-hyperbolic-degeneration} explains its
relation to the \(2\times2\) description, in particular
Corollary~\ref{cor:real-case}.

The paper is organized as follows. We first recall the models, metric, and
angle conventions for complex hyperbolic space in
Sections~\ref{sec:models} and~\ref{sec:angles}. We then introduce oriented
right-angled hexagons and derive the matrix identity by following the side
isometries around the hexagon in Sections~\ref{sec:hexagons} and~\ref{sec:transport}.
In Section~\ref{sec:real-hyperbolic-degeneration}, we distinguish two
geometrically different subcases, namely the real plane case and the complex
geodesic case. Section~\ref{sec:mostow-canonical-hexagon} concludes
with the matrix verification for Mostow's canonical example.

\textbf{AI Declaration.} The authors used an AI tool only for English language editing and grammar correction. No mathematical arguments, proofs, or computations were generated or checked by AI. All mathematical content and results are the authors' own.

\section{Complex hyperbolic space}\label{sec:models}

In this section we collect the basic facts about complex hyperbolic space
\(\mathbf H^2_{\mathbb C}\) that will be used later. We begin with the
Hermitian vector space \(\mathbb C^{2,1}\), then describe the ball and Siegel
domain models, the Bergman metric, the isometry group, and finally the
parametrization of geodesics. See \cite{Goldman1999,Par2003}. Although we have only define the complex hyperbolic space here, the real hyperbolic space is defined analogously, by restricting each quantity from complex to real.

\subsection{The Hermitian vector space \texorpdfstring{\(\mathbb C^{2,1}\)}{C\^{}\{2,1\}}}

Consider \(\mathbb C^{2,1}\), the complex vector space \(\mathbb C^3\) equipped
with a Hermitian form \(\langle\cdot,\cdot\rangle\) of signature \((2,1)\).
For column vectors
\[
\mathbf z=(z_1,z_2,z_3)^{\top},\qquad
\mathbf w=(w_1,w_2,w_3)^{\top}
\]
in \(\mathbb C^3\), a general Hermitian form of signature \((2,1)\) is given by
\[
\langle \mathbf z,\mathbf w\rangle=\mathbf w^*H\mathbf z,
\]
where \(H\) is a nonsingular \(3\times3\) Hermitian matrix of signature
\((2,1)\). The form is linear in the first variable and conjugate linear in the
second.

Different choices of \(H\) of the same signature give isomorphic models; the
choice of \(H\) is essentially a choice of basis. In what follows we will
introduce the particular Hermitian forms that are most convenient for the ball
model and the Siegel domain model.

Since \(\langle \mathbf z,\mathbf z\rangle\) is real for every \(\mathbf z\), we may
define the subsets
\begin{align*}
V_-&=\{\mathbf z\in\mathbb C^{2,1}:\langle \mathbf z,\mathbf z\rangle<0\},\\
V_0&=\{\mathbf z\in\mathbb C^{2,1}\setminus\{\mathbf 0\}:
\langle \mathbf z,\mathbf z\rangle=0\},\\
V_+&=\{\mathbf z\in\mathbb C^{2,1}:\langle \mathbf z,\mathbf z\rangle>0\}.
\end{align*}
Because
\(
\langle \lambda \mathbf z,\lambda \mathbf z\rangle
=
|\lambda|^2\langle \mathbf z,\mathbf z\rangle,
\)
the sign of \(\langle \mathbf z,\mathbf z\rangle\) is well defined on projective
classes.

Let
\(
\mathbb P:\mathbb C^{2,1}\setminus\{\mathbf 0\}\longrightarrow \mathbb{CP}^2
\)
be the standard projectivization. In the affine chart \(z_3\neq 0\),
\[
\mathbb P(z_1,z_2,z_3)
=
\left(\frac{z_1}{z_3},\frac{z_2}{z_3}\right).
\]
Complex hyperbolic \(2\)-space is defined by
\(
\mathbf H^2_{\mathbb C}=\mathbb P V_-,
\)
and its boundary is
\(
\partial \mathbf H^2_{\mathbb C}=\mathbb P V_0.
\)

\subsection{The ball model of the complex hyperbolic plane}

Take the Hermitian form
\[
H_B=\operatorname{diag}(1,1,-1).
\]
For \(z=(z_1,z_2)\in\mathbb C^2\), define its standard lift to \(\mathbb C^{2,1}\)
by
\[
\mathbf z=
\begin{pmatrix}
z_1\\
z_2\\
1
\end{pmatrix}.
\]
Then
\(
\langle \mathbf z,\mathbf z\rangle
=
|z_1|^2+|z_2|^2-1.
\)
Hence
\[
\mathbf H^2_{\mathbb C}
\cong
\mathbb B^2
=
\{(z_1,z_2)\in\mathbb C^2:|z_1|^2+|z_2|^2<1\},
\]
and
\[
\partial \mathbf H^2_{\mathbb C}
\cong
S^3
=
\{(z_1,z_2)\in\mathbb C^2:|z_1|^2+|z_2|^2=1\}.
\]
This is the ball model of \(\mathbf H^2_{\mathbb C}\), the complex analogue of the
Poincar\'e disc model of the hyperbolic plane.

\subsection{The Siegel domain model of the complex hyperbolic plane}

Take the Hermitian form
\[
H_S=
\begin{pmatrix}
0&0&1\\
0&1&0\\
1&0&0
\end{pmatrix}.
\]
For \(z=(z_1,z_2)\in\mathbb C^2\), again take the standard lift
\[
\mathbf z=
\begin{pmatrix}
z_1\\
z_2\\
1
\end{pmatrix}.
\]
Then
\(
\langle \mathbf z,\mathbf z\rangle
=
2\Re(z_1)+|z_2|^2.
\)
Therefore
\[
\mathbf H^2_{\mathbb C}
=
\{(z_1,z_2)\in\mathbb C^2:2\Re(z_1)+|z_2|^2<0\}.
\]
The part of the boundary lying in \(\mathbb C^2\) is
\[
\partial \mathbf H^2_{\mathbb C}\cap\mathbb C^2
=
\{(z_1,z_2)\in\mathbb C^2:2\Re(z_1)+|z_2|^2=0\}.
\]
One must add the point \(\infty\), whose standard lift is
\[
\begin{pmatrix}
1\\
0\\
0
\end{pmatrix}.
\]
This is the Siegel domain model, the complex analogue of the upper half-plane
model of the hyperbolic plane.

The Cayley transform from the Siegel model to the ball model is
\[
C=
\frac{1}{\sqrt{2}}\begin{pmatrix}
1&0&1\\
0&\sqrt{2}&0\\
1&0&-1
\end{pmatrix}.
\]

\subsection{Distance and the Bergman metric}

The distance function \(\rho(\cdot,\cdot)\) on \(\mathbf H^2_{\mathbb C}\) is
defined by
\[
\cosh^2\left(\frac{\rho(z,w)}{2}\right)
=
\frac{\langle \mathbf z,\mathbf w\rangle\langle \mathbf w,\mathbf z\rangle}
{\langle \mathbf z,\mathbf z\rangle\langle \mathbf w,\mathbf w\rangle},
\]
where \(\mathbf z\) and \(\mathbf w\) are arbitrary lifts of \(z\) and \(w\). This
expression is independent of the chosen lifts.

Equivalently, the Bergman metric is given in differential form by
\[
ds^2
=
-\frac{4}{\langle \mathbf z,\mathbf z\rangle^2}
\det
\begin{pmatrix}
\langle \mathbf z,\mathbf z\rangle & \langle \mathbf z,d\mathbf z\rangle\\
\langle d\mathbf z,\mathbf z\rangle & \langle d\mathbf z,d\mathbf z\rangle
\end{pmatrix}.
\]
The constant \(4\) normalizes the holomorphic sectional curvature of
\(\mathbf H^2_{\mathbb C}\) to be \(-1\).

\subsection{Isometries}

The holomorphic isometry group of \(\mathbf H^2_{\mathbb C}\) is the projective
unitary group \(\mathrm{PU}(2,1)\). It is often convenient to lift to its
triple cover, namely the special unitary group \(\mathrm{SU}(2,1)\), so that
\[
\mathrm{PU}(2,1)
=
\mathrm{SU}(2,1)/\{I,\omega I,\omega^2 I\},
\]
where
\[
\omega=\frac{-1+i\sqrt3}{2}
\]
is a cube root of unity. This is analogous to the double cover
\(\mathrm{SL}(2,\mathbb C)\to\mathrm{PSL}(2,\mathbb C)\) in three dimensional  real hyperbolic
geometry.

\subsection{Complex lines and polar vectors}

A complex line in \(\mathbf H^2_{\mathbb C}\) is the projectivization of a
complex two-dimensional subspace of \(\mathbb C^{2,1}\) on which the Hermitian
form restricts to a form of signature \((1,1)\).

\begin{defn}\label{def:polar}
	
Suppose \(L\) is a complex two-dimensional subspace of \(\mathbb C^{2,1}\)
with signature \((1,1)\). A non-zero vector \(\mathbf n\in\mathbb C^{2,1}\)
is said to be a \emph{polar vector} for \(L\) if it is orthogonal to \(L\)
with respect to the Hermitian form. By a slight abuse of notation, we also
write \(L\) for its projectivization in \(\mathbf H^2_{\mathbb C}\), which is
a complex line.
\end{defn}

The following proposition, due to Goldman, describes the relationship between
two complex lines in terms of their polar vectors.

\begin{prop}[{\cite{Goldman1999}}]\label{thm:polar-formula}
Let \(L\) and \(M\) be complex lines in \(\mathbf H^2_{\mathbb C}\) with polar vectors \(\mathbf n,\mathbf m\)
respectively, and set
\[
N(L,M)
=
\frac{\langle \mathbf n,\mathbf m\rangle
\langle \mathbf m,\mathbf n\rangle}
{\langle \mathbf n,\mathbf n\rangle
\langle \mathbf m,\mathbf m\rangle}.
\]
Then:
\begin{itemize}
\item If \(N(L,M)>1\), then \(L\) and \(M\) are ultraparallel and
\[
\cosh^2\left(\frac{\rho(L,M)}{2}\right)=N(L,M).
\]
\item If \(N(L,M)=1\), then \(L\) and \(M\) are asymptotic or coincide.
\item If \(N(L,M)<1\), then \(L\) and \(M\) intersect and
\[
\cos^2\theta=N(L,M),
\]
where \(\theta\) is the angle of intersection between \(L\) and \(M\).
\end{itemize}
\end{prop}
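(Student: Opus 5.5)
The statement to prove is Goldman's proposition on the relative position of two complex lines. I would prove it by reducing to the complex-geodesic case, i.e. to $\mathbf H^1_{\mathbb C}$ sitting inside $\mathbf H^2_{\mathbb C}$. All the data are $\mathrm{U}(2,1)$-invariant. The quantity $N(L,M)$ is unchanged when $\mathbf n,\mathbf m$ are rescaled, and it does not depend on which polar vector is chosen, since the polar of a signature $(1,1)$ subspace is a positive line. So I may normalize $\langle \mathbf n,\mathbf n\rangle=\langle \mathbf m,\mathbf m\rangle=1$, and then $N(L,M)=|\langle \mathbf n,\mathbf m\rangle|^2$.

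The key step is to consider the span $W=\operatorname{span}(\mathbf n,\mathbf m)$, assuming $L\neq M$ so that $\mathbf n,\mathbf m$ are independent. Its Gram matrix is
\[
G=\begin{pmatrix}1&\langle \mathbf m,\mathbf n\rangle\\ \langle \mathbf n,\mathbf m\rangle&1\end{pmatrix},
\]
with determinant $1-N$. The three cases follow from the signature of $G$.

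\textbf{Case $N<1$.} Here $W$ is positive definite, so $W^\perp$ is a negative line, i.e. a point $p\in\mathbf H^2_{\mathbb C}$. Because $p$ is orthogonal to both polar vectors, $p\in L\cap M$. To compute the angle, I would use the ball model with $p=(0,0,1)^\top$. Then $\mathbf n,\mathbf m$ lie in $\mathbb C^2\times\{0\}$, and $L,M$ are the complex lines through the origin orthogonal (in the standard Hermitian metric of $\mathbb C^2$) to $\mathbf n,\mathbf m$. The angle between two complex lines through a point in $\mathbb C^2$ equals the angle between their unit normals, whose Hermitian product has modulus $\cos\theta$. At the origin the Bergman metric is a multiple of the Euclidean one, so this angle is the Riemannian angle. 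This gives $\cos^2\theta=N$.

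\textbf{Case $N>1$.} Here $W$ has signature $(1,1)$, so it projects to a complex line $\Sigma$, and $W^\perp$ is positive. The polar of $\Sigma$ is then orthogonal to $\mathbf n$ and $\mathbf m$, so it lies in both $L$ and $M$. This means $\Sigma$ meets $L$ and $M$ orthogonally. In addition, $L$ and $M$ cannot intersect inside $\mathbf H^2_{\mathbb C}$: a common negative vector would be orthogonal to $W$, but $W^\perp$ is positive. Inside $\Sigma\cong\mathbf H^1_{\mathbb C}$, I would take the points $q_L=\mathbf n-\langle \mathbf n,\mathbf m\rangle\mathbf m$ and $q_M=\mathbf m-\langle \mathbf m,\mathbf n\rangle\mathbf n$, which lie in $W$ with $q_L\perp\mathbf m$ and $q_M\perp\mathbf n$. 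Thus $q_L=\Sigma\cap M$ and $q_M=\Sigma\cap L$. A direct computation gives
\[
\langle q_L,q_L\rangle=\langle q_M,q_M\rangle=1-N,\qquad |\langle q_L,q_M\rangle|^2=N(N-1)^2.
\]
The distance formula then yields $\cosh^2(\rho/2)=N$. The common perpendicular complex line realizes the distance, so $\rho(L,M)=\rho(q_L,q_M)$.

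\textbf{Case $N=1$.} Here $G$ is degenerate. If $W$ is one-dimensional then $L=M$. Otherwise $W^\perp$ is a null line, i.e. a boundary point common to $L$ and $M$. There is no interior intersection, because the argument of the previous case adapts: a negative vector orthogonal to $W$ would force $W$ to be positive definite. So $L$ and $M$ are asymptotic.

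The main obstacle is the justification that $\rho(L,M)=\rho(q_L,q_M)$, that is, that the common orthogonal complex line realizes the distance between $L$ and $M$. I would prove it by showing that orthogonal projection onto $L$ does not increase distances. Alternatively, I would minimize $|\langle x,y\rangle|^2/(\langle x,x\rangle\langle y,y\rangle)$ over $x\in L$ and $y\in M$ by writing $x,y$ in adapted orthonormal bases.
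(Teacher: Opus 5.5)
The paper gives no proof of this proposition; it quotes it from Goldman's book. There is therefore nothing in the paper to compare your route against. Your argument reads the three cases off the signature of \(W=\operatorname{span}(\mathbf n,\mathbf m)\). It is correct and is essentially the standard argument. Your computations \(\langle q_L,q_L\rangle=\langle q_M,q_M\rangle=1-N\) and \(\langle q_L,q_M\rangle=\langle\mathbf n,\mathbf m\rangle(N-1)\) check out.

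\textbf{The step you flag.} Projecting onto \(L\) is not the map that does the job, because \(L\) already contains one of the two points. Project instead onto the common perpendicular \(\Sigma\). Let \(\mathbf c\) be a unit polar vector of \(\Sigma\), spanning \(W^\perp\). Then the lifts of \(L\) and \(M\) are \(\operatorname{span}(q_M,\mathbf c)\) and \(\operatorname{span}(q_L,\mathbf c)\). Hence \(z\mapsto z-\langle z,\mathbf c\rangle\mathbf c\) collapses \(L\) to \(q_M\) and \(M\) to \(q_L\).

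Your second option is just as quick and fully self-contained. Put \(\hat q=q/\sqrt{N-1}\), \(x=\hat q_M+s\mathbf c\), \(y=\hat q_L+t\mathbf c\) with \(|s|,|t|<1\), and \(b=\langle\hat q_M,\hat q_L\rangle\), so \(|b|^2=N\). Then
\[
\frac{|\langle x,y\rangle|^2}{\langle x,x\rangle\langle y,y\rangle}
=\frac{|b+s\bar t|^2}{(1-|s|^2)(1-|t|^2)}
\ge\frac{(|b|-|s||t|)^2}{(1-|s||t|)^2}
\ge|b|^2 .
\]
This uses \((1-|s|^2)(1-|t|^2)\le(1-|s||t|)^2\) and \(|b|\ge1\), with equality at \(s=t=0\).

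\textbf{Two small completions.}
\begin{enumerate}
\item \emph{Ultraparallel} also excludes a common boundary point. This follows from the same observation you used: \(W^\perp\) is positive, so it contains no null vector.
\item In the case \(N=1\), say why \(W^\perp\) is null. It is one-dimensional and contains the nonzero radical \(W\cap W^\perp\) of the degenerate form on \(W\).
\end{enumerate}
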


\subsection{Real geodesics}

Consider a pair of distinct null vectors \(\mathbf p,\mathbf q\in V_0\).
Without loss of generality, normalize them so that
\[
\langle {\mathbf p},\mathbf q\rangle=-1.
\]
These vectors correspond to a pair of points \(p\) and \(q\) on
\(\partial\mathbf H^2_{\mathbb C}\). The geodesic with endpoints \(p\) and
\(q\) admits the following explicit parametrization.

\begin{prop}
Let \(\mathbf p,\mathbf q\in V_0\) be null vectors with
\(\langle {\mathbf p},\mathbf q\rangle=-1\). For all real \(t\), let
\(\gamma(t)\) be the point in \(\mathbf H^2_{\mathbb C}\) corresponding to the
vector
\[
e^{t/2}\mathbf p+e^{-t/2}\mathbf q
\]
in \(\mathbb C^{2,1}\). Then
\[
\gamma=\{\gamma(t)\mid t\in\mathbb R\}
\]
is the geodesic in \(\mathbf H^2_{\mathbb C}\) with endpoints \(p\) and \(q\),
parametrized by arc length.
\end{prop}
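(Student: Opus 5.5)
The plan is to verify directly from the distance formula that \(\gamma\) is an isometric embedding of \(\mathbb R\), and then to identify its endpoints. Put \(\mathbf z_t=e^{t/2}\mathbf p+e^{-t/2}\mathbf q\). Since \(\langle\mathbf p,\mathbf q\rangle=-1\) is real, Hermitian symmetry gives \(\langle\mathbf q,\mathbf p\rangle=\overline{\langle\mathbf p,\mathbf q\rangle}=-1\). Because \(\mathbf p,\mathbf q\) are null, expanding bilinearly yields
\[
\langle\mathbf z_t,\mathbf z_t\rangle=\langle\mathbf p,\mathbf q\rangle+\langle\mathbf q,\mathbf p\rangle=-2<0 .
\]
So \(\mathbf z_t\in V_-\), and \(\gamma(t)\) is a genuine point of \(\mathbf H^2_{\mathbb C}\) for every real \(t\).

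Next I compute the mixed product for real \(s,t\). Since the exponentials are real, the cross terms give
\[
\langle\mathbf z_s,\mathbf z_t\rangle=e^{(s-t)/2}\langle\mathbf p,\mathbf q\rangle+e^{(t-s)/2}\langle\mathbf q,\mathbf p\rangle=-2\cosh\frac{s-t}{2}.
\]
This value is real, so \(\langle\mathbf z_t,\mathbf z_s\rangle\) is the same number. Substituting into the distance formula gives
\[
\cosh^2\frac{\rho(\gamma(s),\gamma(t))}{2}=\frac{4\cosh^2\frac{s-t}{2}}{(-2)(-2)}=\cosh^2\frac{s-t}{2},
\]
and hence \(\rho(\gamma(s),\gamma(t))=|s-t|\). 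Thus \(t\mapsto\gamma(t)\) is an isometric embedding of \(\mathbb R\) into \(\mathbf H^2_{\mathbb C}\). In a Riemannian manifold, a curve that realizes the distance between any two of its points is a minimizing geodesic parametrized by arc length. Hence \(\gamma\) is a unit-speed complete geodesic.

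For the endpoints, rescale the lift by \(e^{-t/2}\), giving \(\mathbf p+e^{-t}\mathbf q\), which tends to \(\mathbf p\) as \(t\to+\infty\). Rescaling by \(e^{t/2}\) instead gives \(\mathbf q+e^{t}\mathbf p\), which tends to \(\mathbf q\) as \(t\to-\infty\). Projectively, therefore, \(\gamma(t)\to p\) and \(\gamma(t)\to q\) in \(\overline{\mathbf H^2_{\mathbb C}}\). Finally, \(\mathbf H^2_{\mathbb C}\) is a simply connected complete manifold with sectional curvature pinched in \([-1,-1/4]\). In such a space two distinct ideal points are joined by a unique geodesic, so \(\gamma\) is \emph{the} geodesic with endpoints \(p\) and \(q\).

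The main point requiring care is this last step: passing from "distance-preserving curve" to "the geodesic between \(p\) and \(q\)". I would handle it by invoking the standard facts for Hadamard manifolds with curvature bounded above by a negative constant. These are uniqueness of geodesics between distinct boundary points, and the fact that a distance-realizing curve is a geodesic. The algebraic part of the argument is routine once the normalization \(\langle\mathbf p,\mathbf q\rangle=-1\) has been used to make all the products real.
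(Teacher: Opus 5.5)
Your proof is correct. The paper gives no proof of this proposition: it is quoted as background from Goldman and Parker, so there is no in-paper argument to compare routes against.

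Your argument is the standard verification. It has three parts:
\begin{itemize}
\item the computations \(\langle\mathbf z_t,\mathbf z_t\rangle=-2\) and \(\langle\mathbf z_s,\mathbf z_t\rangle=-2\cosh\frac{s-t}{2}\), which give \(\rho(\gamma(s),\gamma(t))=|s-t|\);
\item the projective limits \(\gamma(t)\to p\) as \(t\to+\infty\) and \(\gamma(t)\to q\) as \(t\to-\infty\);
\item uniqueness of the geodesic joining two distinct ideal points in pinched negative curvature.
\end{itemize}

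Two implicit points deserve one line each. First, \(p\neq q\) because \(\langle\mathbf p,\mathbf q\rangle\neq0\): if \(\mathbf q=c\,\mathbf p\) then \(\langle\mathbf p,\mathbf q\rangle=\bar c\langle\mathbf p,\mathbf p\rangle=0\). Second, the uniqueness step uses the standard fact that \(\mathbb P V_0\), with the topology inherited from \(\mathbb{CP}^2\), coincides with the visual boundary of \(\mathbf H^2_{\mathbb C}\). This is what makes your projective limits the endpoints in the sense to which the Hadamard-manifold uniqueness theorem applies.
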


\subsection{Eigenvalues and complex displacement for loxodromic maps}\label{subsection:eigenvalues}

In this subsection we recall how the trace of a loxodromic isometry in
\(\mathrm{SU}(2,1)\) encodes its translation length and rotational part as it
acts on \(\mathbf{H}^2_{\mathbb{C}}\). We begin with the familiar cases of
\(\mathrm{SL}(2,\mathbb{R})\) and \(\mathrm{SL}(2,\mathbb{C})\), in
\(\mathbf{H}^2_{\mathbb{R}}\) and \(\mathbf{H}^3_{\mathbb{R}}\), respectively,
and then discuss the corresponding statement for \(\mathrm{SU}(2,1)\) acting
on \(\mathbf{H}^2_{\mathbb{C}}\). See \cite{Parker2012}.

A loxodromic element \(A\in\mathrm{SL}(2,\mathbb R)\) or \(A\in\mathrm{SU}(1,1)\)
with eigenvalues \(\lambda\) and \(\lambda^{-1}\), where \(|\lambda|>1\),
corresponds to a hyperbolic isometry, which we also denote by \(A\), in
\(\mathrm{PSL}(2,\mathbb R)\) or \(\mathrm{PU}(1,1)\), respectively. Since \(A\)
is loxodromic, it has two fixed points on the boundary of the hyperbolic plane,
and these are the projections of the eigenspaces. The geodesic  in the hyperbolic plane joining these
two fixed points is called the \emph{axis} of \(A\), and is denoted
\(\tilde{\alpha}\).

Then  \(\mathbf H^2_{\mathbb R}/\langle A\rangle\), or \(\mathbf H^1_{\mathbb C}/\langle A\rangle\), is a hyperbolic cylinder, and
\(\alpha=\tilde{\alpha}/\langle A\rangle\) is the hyperbolic geodesic around its
waist with hyperbolic length \(\ell\), where
\[
|\lambda|=e^{\ell/2},
\qquad
|\operatorname{tr}(A)|=2\cosh(\ell/2).
\]
In other words, \(A\) translates along its axis by a hyperbolic translation
length of \(\ell\).

Similarly, when \(A\in\mathrm{SL}(2,\mathbb C)\), its trace corresponds to a
complex length. More precisely, suppose
\[
\operatorname{tr}(A)=\lambda+\lambda^{-1},
\qquad |\lambda|>1.
\]
Then again \(|\lambda|=e^{\ell/2}\).
To find the argument of \(\lambda\), fix \(z\in\tilde{\alpha}\subset
\mathbf{H}^3_{\mathbb{R}}\), and let \(\xi\in T_z(\mathbf{H}^3_{\mathbb{R}})\)
be a tangent vector orthogonal to \(\tilde{\alpha}\), the axis of \(A\). Let
\(\xi'\) denote the parallel transport of \(\xi\) along \(\tilde{\alpha}\) to
\(A(z)\). Then
\[
A_*\xi=\xi' e^{i\phi},
\]
where \(e^{i\phi}\) is understood after identifying
\(T_{A(z)}(\mathbf{H}^3_{\mathbb{R}})\) with \(T_z(\mathbf{H}^3_{\mathbb{R}})\)
via this parallel transport.
In other
words, \(A\) translates along \(\tilde{\alpha}\) by a hyperbolic distance
\(\ell\) and rotates the tangent space by an angle \(\phi\). Then
\[
\lambda=e^{\ell/2+i\phi/2},
\qquad
\operatorname{tr}(A)=2\cosh(\ell/2+i\phi/2).
\]
Since \(\phi\) is defined modulo \(2\pi\), the imaginary part of
\(\ell/2+i\phi/2\) is defined modulo \(\pi\). This introduces an ambiguity of
\(\pm1\) in the trace, and this corresponds exactly to the ambiguity introduced
when lifting \(A\) from \(\mathrm{PSL}(2,\mathbb C)\) to
\(\mathrm{SL}(2,\mathbb C)\).


We now illustrate how the geometric action of \(A\in\mathrm{SU}(2,1)\)  on \(\mathbf H^2_{\mathbb C}\) is
recorded by \(\operatorname{tr}(A)\). In principle the relationship is very
similar to the cases of \(\mathrm{SL}(2,\mathbb R)\) and
\(\mathrm{SL}(2,\mathbb C)\), but the functions involved are more complicated.

\begin{prop}\label{prop:loxodromic-su21}
Let \(A\in\mathrm{SU}(2,1)\) be a loxodromic map with axis \(\tilde{\alpha} \subset  \mathbf H^2_{\mathbb C}\).
Let \(\lambda\in\mathbb C\) be the eigenvalue of \(A\) with \(|\lambda|>1\).
Suppose that \(A\) has Bergman translation length \(\ell\) along
\(\tilde{\alpha}\) and rotates the complex normal direction to the complex geodesic
containing \(\tilde{\alpha}\) by an angle \(\phi\), measured relative
to parallel translation along the axis. Then
\[
\lambda=e^{\ell/2-i\phi/3},
\]
and
\[
\operatorname{tr}(A)
=
2\cosh(\ell/2)e^{-i\phi/3}+e^{2i\phi/3}.
\]
Furthermore, since \(\phi\) is defined modulo \(2\pi\), the argument of
\(\lambda\) is only given modulo \(2\pi/3\), and so these formulae are only well
defined on a chosen lift to \(\mathrm{SU}(2,1)\). On \(\mathrm{PU}(2,1)\),
\(\lambda\) is determined up to a cube root of unity.
We call \(\lambda\) the \emph{complex length parameter}, to distinguish
it from the additive displacement \(\ell+i\phi\).
See Figure~\ref{fig:complex-length}.
\end{prop}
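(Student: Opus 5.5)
The plan is to conjugate \(A\) into a diagonal normal form adapted to its axis, and then read off the translation length and the rotation angle directly from the diagonal entries. Since all quantities in the statement are conjugation invariant, it suffices to work in one convenient model, and I will use the Siegel domain model with Hermitian form \(H_S\).

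\emph{Step 1: normal form.} A loxodromic \(A\) fixes exactly two boundary points, the endpoints \(p,q\) of \(\tilde\alpha\), so it has null eigenvectors \(\mathbf p,\mathbf q\) with \(\langle\mathbf p,\mathbf q\rangle\neq0\). The Hermitian orthogonal complement of \(\operatorname{span}(\mathbf p,\mathbf q)\) is a positive line spanned by a polar vector \(\mathbf n\) of the complex geodesic containing \(\tilde\alpha\) (Definition~\ref{def:polar}), and it is \(A\)-invariant. After applying an element of \(\Su\) we may take \(\mathbf p=(1,0,0)^\top\) (the point \(\infty\)), \(\mathbf q=(0,0,1)^\top\), and \(\mathbf n=(0,1,0)^\top\). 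Write \(A\mathbf p=\lambda\mathbf p\), \(A\mathbf q=\mu\mathbf q\), \(A\mathbf n=\nu\mathbf n\). Then:
\begin{itemize}
\item \(\langle A\mathbf p,A\mathbf q\rangle=\langle\mathbf p,\mathbf q\rangle\) gives \(\lambda\bar\mu=1\), so \(\mu=1/\bar\lambda\);
\item \(\langle A\mathbf n,A\mathbf n\rangle=\langle\mathbf n,\mathbf n\rangle\) gives \(|\nu|=1\);
\item \(\det A=1\) then forces \(\nu=\bar\lambda/\lambda\).
\end{itemize}
Hence \(A=\operatorname{diag}(\lambda,\bar\lambda/\lambda,1/\bar\lambda)\), and relabelling \(p,q\) if needed we may assume \(|\lambda|>1\). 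This also gives the eigenvalue triple quoted in the introduction. Write \(\lambda=e^{s}e^{i\alpha}\) with \(s>0\).

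\emph{Step 2: translation length.} Normalize \(\langle\mathbf p,\mathbf q\rangle=-1\) and use the arc-length parametrization \(\gamma(t)=e^{t/2}\mathbf p+e^{-t/2}\mathbf q\) from the proposition on real geodesics. We have
\[
A\gamma(t)=e^{i\alpha}\bigl(e^{s}e^{t/2}\mathbf p+e^{-s}e^{-t/2}\mathbf q\bigr),
\]
which is projectively \(\gamma(t+2s)\). Thus \(\ell=2s\), that is, \(|\lambda|=e^{\ell/2}\).

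\emph{Step 3: rotation angle.} Factor \(A=D\,R\), where \(D=\operatorname{diag}(e^{s},1,e^{-s})\) and \(R=e^{i\alpha}\operatorname{diag}(1,e^{-3i\alpha},1)\). Here \(D\) is the transvection along \(\tilde\alpha\), whose differential realizes parallel transport along the axis. The map \(R\) fixes the axis pointwise; projectively it equals \(\operatorname{diag}(1,e^{-3i\alpha},1)\).

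To see how \(R\) acts on the normal direction, identify \(T_z\mathbf H^2_{\mathbb C}\) with \(\operatorname{Hom}(\mathbb C\mathbf z,\mathbf z^\perp)\) at \(z=\gamma(0)\). The direction \(\mathbf n\) spans the complex normal to the complex geodesic, and \(R_*\) multiplies it by \(e^{-3i\alpha}\). This is because \(R\) scales \(\mathbf n\) by \(e^{-2i\alpha}\) and the lift \(\mathbf z\) by \(e^{i\alpha}\), and the tangent map is the ratio of the two. Therefore \(\phi=-3\alpha\), so \(\lambda=e^{\ell/2-i\phi/3}\).

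\emph{Step 4: trace.} Summing the eigenvalues,
\[
\operatorname{tr}A=e^{\ell/2}e^{-i\phi/3}+e^{2i\phi/3}+e^{-\ell/2}e^{-i\phi/3}=2\cosh(\ell/2)e^{-i\phi/3}+e^{2i\phi/3}.
\]

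\emph{Lift ambiguity.} Replacing \(A\) by \(\omega A\) shifts \(\alpha\) by \(2\pi/3\), and hence \(\phi\) by \(-2\pi\). So \(\phi\bmod 2\pi\) determines \(\arg\lambda\) only modulo \(2\pi/3\), matching the cube roots of unity in \(\Su\to\Pu\).

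The main obstacle is Step 3. It needs two facts that I would verify carefully: that \(D_*\) really is parallel transport along the axis (it is a transvection, the exponential of a boost in the symmetric-space Cartan decomposition), and that the tangent action is computed via \(\operatorname{Hom}(\mathbb C\mathbf z,\mathbf z^\perp)\). The latter accounts for the \(e^{i\alpha}\) rescaling of the lift of \(z\), which is exactly what turns \(-2\alpha\) into \(-3\alpha\).
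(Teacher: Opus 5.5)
Your proof is correct. The paper gives no argument of its own for this proposition, only a citation to Parker. Your route is the standard argument behind that citation: conjugate $A$ to $\operatorname{diag}(\lambda,\bar\lambda/\lambda,1/\bar\lambda)$ in a basis of the two null eigenvectors and the polar vector, read $\ell$ from $|\lambda|$ via the arc-length parametrization, and read $\phi$ from the action on the polar direction. It also matches the matrices $A_n$ the paper uses later.

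The two points you flag in Step 3 do hold:
\begin{itemize}
\item Conjugating by the Cayley transform $C$ turns $D$ into the boost $\bigl(\begin{smallmatrix}\cosh s&0&\sinh s\\ 0&1&0\\ \sinh s&0&\cosh s\end{smallmatrix}\bigr)$. This is a transvection along the geodesic through the origin of the ball, so its differential is parallel transport along the axis.
\item $CRC^{-1}=R$, and $R$ acts on the holomorphic ball coordinate $z_2$ (the complex normal direction at the origin) by multiplication by $e^{-3i\alpha}$. This gives $\phi=-3\alpha$, as you claim.
\end{itemize}
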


\begin{figure}[H]
    \centering
    \tikzset{every picture/.style={line width=0.6pt}} 

    \begin{tikzpicture}[x=0.6pt, y=0.6pt, yscale=-1, xscale=1]

    \draw [line width=1.5] (202.63,235.06) .. controls (202.63,213.55) and (240.88,196.12) .. (288.07,196.12)
        .. controls (335.25,196.12) and (373.5,213.55) .. (373.5,235.06)
        .. controls (373.5,256.57) and (335.25,274) .. (288.07,274)
        .. controls (240.88,274) and (202.63,256.57) .. (202.63,235.06) -- cycle;

    \draw [line width=0.75] (296.5,18) -- (296.52,23) -- (297.5,302);

    \draw [line width=1.5] (247.94,236.8) -- (297.06,237.96);

    \draw [line width=1.5] [dash pattern={on 1.69pt off 2.76pt}] (248.63,75.22) -- (297.75,76.39);

    \draw [line width=1.5] (259.36,98.47) -- (297.75,76.39);

    \draw [line width=1.5] (199.86,83.94) .. controls (199.86,62.43) and (238.11,45) .. (285.3,45)
        .. controls (332.48,45) and (370.73,62.43) .. (370.73,83.94)
        .. controls (370.73,105.45) and (332.48,122.88) .. (285.3,122.88)
        .. controls (238.11,122.88) and (199.86,105.45) .. (199.86,83.94) -- cycle;

    \draw [draw opacity=0][line width=1.5]
        (275.44,86.31) .. controls (271.81,84.8) and (269.55,82.28) .. (269.95,79.7)
        .. controls (270.22,77.92) and (271.71,76.44) .. (273.89,75.46)
        -- (281.61,80.96) -- cycle;
    \draw [line width=1.5]
        (275.44,86.31) .. controls (271.81,84.8) and (269.55,82.28) .. (269.95,79.7)
        .. controls (270.22,77.92) and (271.71,76.44) .. (273.89,75.46);

    \draw (297.5,171) -- (297.5,156);
    \draw [shift={(297.5,154)}, rotate=90]
        [color={rgb,255:red,0; green,0; blue,0}][line width=0.75]
        (10.93,-3.29) .. controls (6.95,-1.4) and (3.31,-0.3) .. (0,0)
        .. controls (3.31,0.3) and (6.95,1.4) .. (10.93,3.29);

    \draw (247.82,74.6) node [anchor=north west][inner sep=0.75pt] {$\phi$};
    \draw (310.08,146.67) node [anchor=north west][inner sep=0.75pt] {$\ell$};

    \end{tikzpicture}

    \caption{Translation of \(A\in\mathrm{SU}(2,1)\) along the axis \(\tilde{\alpha} \subset  \mathbf H^2_{\mathbb C}\) and rotation in its complex normal direction}
    \label{fig:complex-length}
\end{figure}

\section{Angles in complex hyperbolic space}\label{sec:angles}

We recall the angle conventions used in the introduction;
see~\cite[Chapter~2]{Goldman1999}. Let \(u,v\) be real-linearly
independent tangent vectors at \(p \in  \mathbf H^2_{\mathbb C}\), and put
\(P=\operatorname{span}_{\mathbb R}\{u,v\}\).
The complex structure \(J\)  on  \(\mathbf H^2_{\mathbb C}\) satisfies \(J^2=-\operatorname{Id}\).
The tangent Hermitian product is linear in its first variable, with
\(g(u,v)=\operatorname{Re}\langle u,v\rangle\), where \(g\) is the
Bergman metric. The angles defined in Subsection  \ref{subsection:angle} are unchanged by positive real
rescaling, so we take \(u,v\) to be unit vectors.

\subsection{The three angles} \label{subsection:angle}
For \(u,v\) be vectors at \(p \) as above, 
the \emph{Riemannian angle} \(\theta\in(0,\pi)\) of \(u\) and \(v\) is the angle
between the two real tangent directions. The \emph{complex angle}
\(\phi\in[0,\pi/2]\)  of \(u\) and \(v\) is the angle between their complex spans.
The \emph{angle of holomorphy} \(\mu\in[0,\pi/2]\)  of \(u\) and \(v\) is associated
to the real plane \(P\). Their formulas are
\begin{equation}\label{eq:angle-definitions}
\cos\theta=\operatorname{Re}\langle u,v\rangle,\qquad
\cos\phi=|\langle u,v\rangle|,\qquad
\cos\mu=\frac{|\operatorname{Im}\langle u,v\rangle|}{\sin\theta}.
\end{equation}
In particular, \(\phi=0\) exactly when \(u,v\) are complex-linearly
dependent, and \(\phi=\pi/2\) exactly when they are Hermitian
orthogonal. The complex angle is also the angle between the
complex geodesics tangent to \(\mathbb C u\) and \(\mathbb C v\).

To interpret the angle  \(\mu\), orthonormalize the real pair by setting
\[
e_1=u,\qquad e_2=\frac{v-\cos\theta\,u}{\sin\theta}.
\]
Since \(\langle u,Jv\rangle=-i\langle u,v\rangle\), we have
\[
\cos\mu=|g(e_1,Je_2)|
       =\frac{|g(u,Jv)|}{\sin\theta}.
\]
Thus \(\mu=0\) means \(P=JP\), while \(\mu=\pi/2\) means
\(P\perp JP\), the totally real case. This description depends only on \(P\), rather than on the particular basis
\(\{u, v\}\).

\subsection{The fundamental relation}

Separating the real and imaginary parts of the Hermitian product $\langle u,v\rangle$ gives
\[
\cos^2\phi=\cos^2\theta+\cos^2\mu\,\sin^2\theta.
\]
Consequently,
\begin{equation}\label{eq:angle-relation}
\sin\phi=\sin\mu\sin\theta.
\end{equation}
The nonnegative square roots are determined by the angle ranges.
The complex angle therefore depends on both the Riemannian angle
between the directions and the angle of holomorphy of their real span.

\subsection{Normal form and the sign of the Hermitian product}

After a unitary change of tangent coordinates, we may assume the pair has the
form
\[
u=\begin{pmatrix}1\\0\end{pmatrix},\qquad
v=\begin{pmatrix}e^{i\eta}\cos\phi\\ \sin\phi\end{pmatrix},
\]
where \(\eta\in(-\pi,\pi]\) and
\(\langle u,v\rangle=e^{-i\eta}\cos\phi\).
Indeed, first send \(u\) to the first coordinate vector, and then multiply the
complex orthogonal complement of \(u\) by a unit complex number so that the
second component of \(v\) is nonnegative. Since \(v\) has unit norm, this
component equals \(\sin\phi\), while the first component has modulus
\(\cos\phi\) and hence is \(e^{i\eta}\cos\phi\). When \(\cos\phi=0\), the phase
is irrelevant and we set \(\eta=0\).

When the projection of \(v\) onto \(\mathbb C u\) is nonzero,
the unsigned angle between \(u\) and that projection is
\(|\eta|\), Goldman's second pseudo-angle. The signed phase \(\eta\)
also records the sign of the imaginary part. In this normal form,
\[
\cos\theta=\cos\phi\cos\eta,
\]
and
\[
\begin{aligned}
\cos\mu\sin\theta
 &=\cos\phi\,|\sin\eta|\\
 &=\sqrt{\cos^2\phi-\cos^2\theta}
  =\sqrt{\sin^2\theta-\sin^2\phi}.
\end{aligned}
\]
The absolute value is necessary because \(\mu\) is unsigned.
For oriented sides of a right-angled hexagon, the sign of the Hermitian product must be
specified separately, as in \eqref{eq:signed-angle}.

\subsection{Special cases and curvature}

\paragraph{A complex plane.}
If \(P=\operatorname{span}_{\mathbb R}\{u,v\}\) is complex, then \(\mu=\phi=0\).
The Riemannian angle may vary, but both real geodesics lie in
the same complex geodesic.

\paragraph{A totally real plane.}
If \(P=\operatorname{span}_{\mathbb R}\{u,v\}\) is totally real, then \(\mu=\pi/2\), so
\[
\sin\phi=\sin\theta,\qquad
\phi=\min\{\theta,\pi-\theta\}.
\]
Thus \(\phi=\theta\) for an acute or right Riemannian angle.
For an obtuse angle, the complex angle is its supplement.

\paragraph{Orthogonality.}
If \(\langle u,v\rangle=0\), then
\(\phi=\mu=\theta=\pi/2\).
Conversely, at a Riemannian right angle, \eqref{eq:angle-relation}
gives \(\phi=\mu\), which may take any value in \([0,\pi/2]\).
Thus Riemannian orthogonality alone does not imply complex
orthogonality.

With holomorphic sectional curvature normalized to \(-1\), the
sectional curvature of \(P\) is
\[
\kappa(P)=-\frac{1+3\cos^2\mu}{4}.
\]
A complex geodesic has curvature \(-1\), whereas a totally real,
totally geodesic plane has curvature \(-1/4\).
In real hyperbolic geometry the sectional curvature is constant;
here it depends on the tangent plane through \(\mu\).
The sine, cosine, and Pythagorean laws in the introduction reflect
this distinction. At the vertices of a triangle, these same definitions
give \((\theta_A,\phi_A,\mu_A)\) and their cyclic counterparts.

\section{Oriented right-angled hexagons in complex hyperbolic space}
\label{sec:hexagons}

From this section onward, \(\theta\) denotes the common complex angle
of consecutive sides of a right-angled hexagon \((\vec{L}_i)_{i=1}^6\) in
\(\mathcal{H}^{\theta}\). Since the Riemannian angle of \(\vec{L}_i\) and
\(\vec{L}_{i+1}\) is fixed at \(\pi/2\) by the definition of a right-angled
hexagon, their angle of holomorphy is also \(\theta\); see
Subsection~\ref{subsec:moduli}.

\subsection{Oriented right-angled hexagons}

We will denote by \(L_i\) a real geodesic, by \(\vec{L}_i\) an oriented real
geodesic, and by \(L^{\mathbb{C}}_i\) the complex line containing \(L_i\).

\begin{defn}\label{defn:rahexagon}
Let \((\vec{L}_i)_{i=1}^6\) be six cyclically ordered, oriented real geodesics in
\(\mathbf H^2_{\mathbb C}\). We say that \((\vec{L}_i)_{i=1}^6\) forms an
\emph{oriented right-angled hexagon} if \(\vec{L}_i\) and \(\vec{L}_{i+1}\), with indices
taken modulo \(6\), meet at a Riemannian right angle.
\end{defn}

The orientation is assigned to each complete line \(\vec{L}_i\), independently
of the cyclic order. Put \(x_i=\vec{L}_i\cap \vec{L}_{i+1}\), with indices taken
modulo six, and assume \(x_{i-1}\ne x_i\). The side segment joins \(x_{i-1}\)
to \(x_i\). Its signed length \(\ell_i\) is positive when the segment is
traversed from \(x_{i-1}\) to \(x_i\) in the chosen orientation of
\(\vec{L}_i\), and negative otherwise. At either
endpoint the oriented tangent points in the positive direction of the
complete line; in particular, it is not automatically negated at the terminal vertex.

The side map used in Lemma \ref{thm:closure}  sends \(x_{i-1}\) to \(x_i\), preserves \(\vec{L}_i\)
as an oriented line, and sends the oriented \(\vec{L}_{i-1}\) to the oriented
\(\vec{L}_{i+1}\). Preserving a line means preserving it setwise, not fixing
each of its points. Reversing an orientation therefore requires the
corresponding side parameters to be reconsidered.
These conventions follow \cite[Appendix A]{TanWongZhang}.

\begin{defn}\label{defn:complexrahexagon}
An oriented right-angled hexagon is \emph{complex right-angled}
if the complex geodesics containing consecutive sides are orthogonal.
Equivalently, its complex angle at every vertex is \(\pi/2\).
\end{defn}

This is the case occurring in the Mostow construction
\cite[Section~2.3]{DerauxFalbelPaupert}.
The moduli of such configurations were studied by Cunha,
Dutenhefner, Gusevskii, and Thebaldi~\cite{Cunha2012}.

\subsection{Moduli space of right-angled hexagons}\label{subsec:moduli}

\begin{defn}\label{defn:nondegenerate}
An oriented right-angled hexagon \((\vec L_i)_{i=1}^6\) is called
\emph{nondegenerate} if
\begin{enumerate}
\item its six vertices \(x_1,\ldots,x_6\) are pairwise distinct;
\item it is not contained in a single complex geodesic;
\item it is not contained in a totally real, totally geodesic plane.
\end{enumerate}
\end{defn}

Let \(\mathcal{H}\) denote the space of oriented right-angled hexagons up to
\(\Pu\), with the side labels fixed. Unless otherwise stated, we restrict to
the nondegenerate locus of Definition~\ref{defn:nondegenerate}. No common
complex angle condition is imposed in the definition of \(\mathcal{H}\).

\begin{prop}\label{prop:dimH}
The nondegenerate locus of \(\mathcal H\) has real dimension \(10\).
\end{prop}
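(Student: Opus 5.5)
Our plan is a parameter count: each geodesic carries a fixed number of real parameters, each right-angle condition cuts out a fixed number, and we subtract the dimension of \(\Pu\). An oriented real geodesic in \(\mathbf H^2_{\mathbb C}\) is determined by an ordered pair of distinct points of \(\partial\mathbf H^2_{\mathbb C}\cong S^3\). The space of such geodesics therefore has real dimension \(6\), and six of them give a \(36\)-dimensional ambient space. Since \(\dim_{\mathbb R}\Pu=8\), the expected dimension is \(36-(\text{number of conditions})-8\). It remains to show that each right-angle condition imposes exactly \(3\) independent real conditions, since \(36-18-8=10\).

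For a single pair \((\vec L_i,\vec L_{i+1})\), we would count directly. Fix \(\vec L_i\), which has \(6\) parameters. An oriented geodesic meeting \(L_i\) orthogonally is determined by three pieces of data: its intersection point on \(L_i\) (\(1\) parameter), a unit tangent vector in the real-orthogonal complement of \(u_i\) in \(T_x\mathbf H^2_{\mathbb C}\), a \(3\)-dimensional real space, so the unit sphere \(S^2\) contributes \(2\) parameters, and nothing further. Hence the geodesics meeting \(L_i\) at a Riemannian right angle form a \(3\)-dimensional family. This is codimension \(3\) in the \(6\)-dimensional space of geodesics.

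Concretely, we would build the configuration sequentially to make the count honest. Choose \(\vec L_1\) (\(6\) parameters), then \(\vec L_2,\dots,\vec L_5\), each orthogonal to its predecessor (\(3\) each, \(12\) total). Then \(\vec L_6\) must be orthogonal to both \(L_5\) and \(L_1\). This is \(6\) conditions on a \(6\)-dimensional space, generically leaving finitely many solutions, or none. That gives \(6+12=18\), and after quotienting by the \(8\)-dimensional group we get \(10\). The stabilizer of a nondegenerate hexagon is finite, because condition (2) of Definition~\ref{defn:nondegenerate} rules out fixing a complex line pointwise, and condition (3) rules out the antiholomorphic-type symmetries of a real plane. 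So the action is locally free and the quotient dimension is the difference.

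The main obstacle is \emph{transversality}: showing that on the nondegenerate locus the \(18\) real equations, namely \(3\) per vertex for six vertices, are independent. Equivalently, the closing condition on \(\vec L_6\) must have full rank \(6\) and the solution set must be nonempty. The alternative would be a smaller-dimensional but non-generic family, such as the \(3\)-dimensional real-plane case \(\mathcal H^0\) or the complex-geodesic case. We would first try an explicit differential computation. Normalize \(\vec L_1\) and \(x_1\) via \(\Pu\), parametrize by the complex lengths \(\lambda_j\) and the unit directions, and compute the Jacobian of the closing map at a hexagon. As a cross-check at one explicit nondegenerate point, for instance Mostow's complex right-angled example, we would verify that the Jacobian has rank \(6\). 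Since rank is lower semicontinuous and the equations are real-analytic, full rank then holds on an open dense subset of each component. To extend this to the whole nondegenerate locus, we would argue that a rank drop forces either coincident vertices or all tangent data lying in a single complex or totally real subspace, which is excluded by Definition~\ref{defn:nondegenerate}.
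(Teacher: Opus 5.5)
Your parameter count is right and the sequential framework can be completed, but as written there is a genuine gap exactly at the step you flag.

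The heuristic ``\(6\) conditions on a \(6\)-dimensional space, finitely many solutions or none'' does not show two things: that the chains \((\vec L_1,\dots,\vec L_5)\) admitting a closing side \(\vec L_6\) fill an open subset of the \(18\)-dimensional chain space, and that no component of the hexagon space has excess dimension. Your remedy does not settle this. A rank-\(6\) check at Mostow's example plus real-analyticity controls at most the component containing that example. The claim that a rank drop forces coincident vertices or a complex or totally real configuration is asserted without argument. It also points to the wrong mechanism: transversality here comes from negative curvature, not from conditions (2)--(3) of Definition~\ref{defn:nondegenerate}. Hexagons in one complex geodesic or in a totally real plane are smooth points of the \(18\)-dimensional configuration space. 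Only the complex-geodesic case behaves differently, and only through its circle stabilizer.

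The missing idea, which is the one the paper uses, is that \(\vec L_6\) is a common perpendicular of \(L_5\) and \(L_1\) with feet \(x_5\ne x_6\). By the first variation formula these feet give a critical point of the convex function \((s,t)\mapsto\rho(\gamma_5(s),\gamma_1(t))\). This is therefore a positive global minimum, so \(L_5\) and \(L_1\) are ultraparallel. Conversely, in strictly negative curvature two ultraparallel geodesics have a unique common perpendicular. The index form shows that the minimum is nondegenerate, so the perpendicular depends smoothly on the pair. Hence the closable chains are exactly the open set where \(L_5\) and \(L_1\) are ultraparallel, intersected with the open distinct-vertex conditions. Each such chain closes in exactly two oriented ways, and no Jacobian computation is needed.

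The paper streamlines this further by parametrizing with the alternating sides \(\vec L_1,\vec L_3,\vec L_5\). The even sides are their common perpendiculars, so the hexagon space is, up to finitely many orientation choices, an open subset of the \(18\)-dimensional space of triples of oriented geodesics.

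Your stabilizer argument also needs repair: \(\Pu\) contains no antiholomorphic elements, and condition (3) is irrelevant. What matters is that a hexagon not contained in one complex geodesic has an adjacent pair with distinct complex spans, so Lemma~\ref{lem:rigidity} makes the stabilizer trivial. Properness of the action, via the equivariant vertex map to \(\HdC\), then gives \(18-8=10\).
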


\begin{proof}
An oriented real geodesic is determined by an ordered pair of
distinct points of \(\partial\HdC\cong S^3\); hence the space of
such geodesics has real dimension \(6\).
Choose the three alternating sides \(\vec{L}_1, \vec{L}_3, \vec{L}_5\).
Each adjacent pair among them is joined by one of the remaining sides,
perpendicular at both ends. In strictly negative curvature, two complete
geodesics joined in this way are ultraparallel: they are disjoint
and have no common ideal endpoint. Their common perpendicular
is unique.

Conversely, start with three pairwise ultraparallel real geodesics.
Their pairwise common perpendiculars determine real geodesics
\(\vec{L}_2, \vec{L}_4, \vec{L}_6\).
The requirement that the two perpendicular feet on each odd side
be distinct is open. On this locus the construction gives a
right-angled hexagon, and it varies smoothly with the three
alternating sides. Orienting the three perpendicular lines involves
only finitely many choices. The space before quotienting by \(\Pu\)
therefore has real dimension \(3\cdot6=18\).

If all consecutive complex spans coincide, the hexagon is contained
in one complex geodesic. Otherwise, some adjacent pair has distinct
complex spans. An isometry stabilizing the labeled oriented hexagon
preserves this pair and is the identity by
Lemma~\ref{lem:rigidity} below. The vertex assignment to \(x_1\) is
equivariant, and \(\Pu\) acts properly on \(\HdC\), so its
action on this configuration space is proper as well.
The quotient on this locus consequently has real dimension
\[
18-\dim_{\mathbb R}\Pu=18-8=10.
\]
\end{proof}

The determining data in this argument are three \emph{alternating}
sides, not three consecutive sides. The other sides are their common
perpendiculars. Configurations in a single complex geodesic have an
additional normal stabilizer and are considered separately below.

Let \(\mathcal H_{\mathbb C}\subset\mathcal H\) denote the space of complex
right-angled hexagons. To compare with~\cite{Cunha2012}, take six distinct
points in \(\HdC\) in general position, meaning that no three lie in one
complex geodesic, and assume in addition that the complex geodesics
determined by consecutive points are orthogonal. Then these complex side
geodesics \(L^{\mathbb{C}}_1,\ldots,L^{\mathbb{C}}_6\) form a \emph{proper
right-angled \(c\)-hexagon} in the terminology of that paper; its vertices
are interior rather than ideal.

\begin{prop}[{\cite[Theorem~4.6]{Cunha2012}}]\label{prop:dimHC}
On this proper locus, \(\mathcal H_{\mathbb C}\) has real dimension \(4\).
Assigning orientations to the real sides does not change this dimension.
\end{prop}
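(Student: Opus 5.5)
The statement is Proposition~\ref{prop:dimHC}, which is cited from \cite[Theorem~4.6]{Cunha2012}. The plan is therefore to reduce our setting to theirs, and then to check that orientations add no continuous parameters.

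\emph{Reduction to complex geodesics.} On the proper locus, the hexagon is determined up to $\Pu$ by the six complex side geodesics $L^{\mathbb C}_1,\dots,L^{\mathbb C}_6$. Consecutive complex geodesics are orthogonal, so they meet in exactly one point, $x_i=L^{\mathbb C}_i\cap L^{\mathbb C}_{i+1}$. The real side $L_i$ is then the unique real geodesic through $x_{i-1}$ and $x_i$; these points are distinct and both lie in $L^{\mathbb C}_i$. Conversely, given the real sides, each $L^{\mathbb C}_i$ is recovered as the complex span of $L_i$. Two consecutive real sides lie in orthogonal complex geodesics, so they are Hermitian orthogonal at $x_i$ and in particular meet at a Riemannian right angle. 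This gives a $\Pu$-equivariant bijection between unoriented complex right-angled hexagons and proper right-angled $c$-hexagons. The general position hypothesis is exactly the nondegeneracy needed for this correspondence to hold.

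\emph{Dimension count.} I would cross-check the count of \cite{Cunha2012} with polar vectors. Each complex geodesic has a polar vector $\mathbf n_i\in V_+$, well defined up to scale, so the space of complex geodesics has real dimension $4$. Orthogonality of consecutive geodesics means $\langle\mathbf n_i,\mathbf n_{i+1}\rangle=0$, one complex condition each, giving $6$ complex equals $12$ real conditions. This yields $24-12-8=4$, provided the conditions are transverse and the stabilizer is finite on the proper locus. For transversality I would use Proposition~\ref{thm:polar-formula} together with a local Jacobian check at one explicit configuration, for instance Mostow's. This transversality check is the main obstacle, and I would simply invoke Theorem~4.6 of \cite{Cunha2012} for it.

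\emph{Orientations.} Each real side admits exactly two orientations. Choosing orientations is therefore a $2^6$-fold finite covering data over the unoriented moduli, locally constant and $\Pu$-equivariant. Hence it does not change the dimension.
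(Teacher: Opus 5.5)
Your reduction to proper right-angled \(c\)-hexagons matches the first paragraph of the paper's proof. Since the statement is itself the cited Theorem~4.6, deferring to it is legitimate. Where you differ is the independent dimension count.

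You count all six complex geodesics (\(24\) real parameters) against six complex orthogonality conditions. You must then assume transversality and a finite stabilizer.

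The paper instead parametrizes by the three alternating geodesics \(L^{\mathbb C}_1,L^{\mathbb C}_3,L^{\mathbb C}_5\). These are pairwise ultraparallel and range over an open set of real dimension \(12\). The even ones are then forced: \(\mathbf n_2\) must be orthogonal to the independent pair \(\mathbf n_1,\mathbf n_3\), hence is determined up to scale, and likewise \(\mathbf n_4,\mathbf n_6\).

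So your constraint set is a graph over the alternating data, and your ``main obstacle'' disappears. Each of your six equations \(\langle\mathbf n_i,\mathbf n_{i+1}\rangle=0\) involves exactly one even-indexed vector. The resulting block Jacobian with respect to \(\mathbf n_2,\mathbf n_4,\mathbf n_6\) is surjective whenever \(\mathbf n_{i-1},\mathbf n_{i+1}\) are independent for even \(i\). This holds on the whole proper locus: if \(L^{\mathbb C}_{i-1}=L^{\mathbb C}_{i+1}\), the distinct vertices \(x_{i-1},x_i\) would both lie in \(L^{\mathbb C}_{i-1}\cap L^{\mathbb C}_i\).

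The same choice settles the stabilizer, which you left open. The vectors \(\mathbf n_1,\mathbf n_3,\mathbf n_5\) form a basis; otherwise \(L^{\mathbb C}_2=L^{\mathbb C}_4=L^{\mathbb C}_6\) and consecutive vertices collapse. They also have nonzero pairwise Hermitian products. Hence a unitary map preserving the three polar lines is scalar, the \(\Pu\)-action is free, and the dimension is \(12-8=4\).

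The check you propose, a Jacobian computation at one explicit configuration such as Mostow's, would only certify dimension \(4\) near that configuration, not on the whole proper locus.

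Your treatment of orientations as finite data agrees with the paper.
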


\begin{proof}
We explain the correspondence with the cited moduli problem.
The three alternating complex geodesics
\(L^{\mathbb{C}}_1, L^{\mathbb{C}}_3, L^{\mathbb{C}}_5\) are
pairwise ultraparallel. Each pair has a unique complex geodesic
orthogonal to both; these are
\(L^{\mathbb{C}}_2,L^{\mathbb{C}}_4,L^{\mathbb{C}}_6\).
The intersections
\(x_i=L^{\mathbb{C}}_i\cap L^{\mathbb{C}}_{i+1}\)
recover the vertices. The real line
\(L_i\subset L^{\mathbb{C}}_i\) is then the unique real geodesic
through \(x_{i-1}\) and \(x_i\). Thus passing from the complex
configuration to our real sides adds only their orientation choices.

For the dimension count, a complex geodesic is determined by its
positive polar point in \(\mathbb{CP}^2\), so the space of complex
geodesics has real dimension \(4\). Triples of complex geodesics
satisfying the stated strict disjointness and nondegeneracy conditions
form an open set of real dimension \(12\).
Their polar vectors are linearly independent: otherwise the
three alternating complex geodesics would have the same complex
common perpendicular, collapsing consecutive vertices.
Normalize the three polar vectors to have norm \(1\).
Their pairwise Hermitian products are nonzero. A unitary map
preserving each polar line must therefore multiply all three
vectors by the same unit scalar. Since they form a basis, it is
projectively the identity. The stabilizer in \(\Pu\) is trivial,
and the quotient has real dimension
\[
3\cdot4-8=4.
\]
The reconstruction above identifies it with the unoriented
configuration space of~\cite[Theorem~4.6]{Cunha2012}.
The finitely many choices of real side orientations add no
continuous parameters.
\end{proof}

For \(\theta\in[0,\pi/2]\), let \(\mathcal H^\theta\) be the subspace
of \(\mathcal H\) consisting of nondegenerate hexagons for which all
six unsigned complex angles equal \(\theta\). By definition,
\[
\mathcal H^{\pi/2}=\mathcal H_{\mathbb C}.
\]
When \(\theta=0\), consecutive real sides have the same complex
span. Propagating this equality around the hexagon shows that all
six sides lie in a single complex geodesic. Intrinsically this is
a complex hyperbolic line of curvature \(-1\).
The ordinary nondegenerate right-angled hexagons there have three
real moduli, given by three alternating positive side lengths;
labeling and orienting the sides only add discrete choices.
Thus \(\mathcal H^0\) has real dimension \(3\).
Hexagons in a \emph{totally real} plane also have three real
moduli, but lie in \(\mathcal H^{\pi/2}\), and the restricted
curvature is \(-1/4\).

These subspaces are defined using unsigned angles. For
\(0<\theta<\pi/2\), Theorem~\ref{thm:main} additionally requires
the signed orientation condition \eqref{eq:signed-angle};
membership in \(\mathcal H^\theta\) alone does not supply it.

\subsection{Side parameters of right-angled hexagons}\label{subsection:sideparameters}

For the hexagons in Theorem~\ref{thm:main}, let \(\tau_j\) be the
unique holomorphic isometry preserving oriented \(\vec{L}_j\) and
sending oriented \(\vec{L}_{j-1}\) to oriented \(\vec{L}_{j+1}\).
Lemma~\ref{thm:closure} later proves that these maps exist under
the condition~\eqref{eq:signed-angle}.
The map \(\tau_j\) has signed translation length \(\ell_j\), as
defined in Subsection \ref{subsection:eigenvalues}, and normal rotation angle \(\phi_j\) modulo \(2\pi\).
Its \emph{complex length parameter} is
\[
\lambda_j=e^{\ell_j/2-i\phi_j/3}.
\]
It is defined up to a cube root of unity, corresponding to the
choice of lift of \(\tau_j\) to \(\Su\).
In particular, \(|\lambda_j|\) may be less than \(1\) when
\(\ell_j<0\).

\subsection{Orthogonality and rigidity}

\begin{prop}
For two intersecting real geodesics \(L_1\) and \(L_2\) in the complex hyperbolic plane, if the
complex geodesics containing them are orthogonal, the real geodesics
\(L_1\) and \(L_2\) meet at a Riemannian right angle. The converse
does not hold in general.
	
\end{prop}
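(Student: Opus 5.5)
The argument works entirely in the tangent space at the intersection point, using the angle formulas \eqref{eq:angle-definitions}. Let \(x = L_1\cap L_2\), and let \(u_1,u_2\in T_x\mathbf H^2_{\mathbb C}\) be unit tangent vectors along \(L_1\) and \(L_2\). The complex geodesic \(L^{\mathbb C}_j\) containing \(L_j\) is the totally geodesic complex line through \(x\) tangent to the complex line \(\mathbb C u_j\subset T_x\mathbf H^2_{\mathbb C}\). At an intersection point, the angle between two complex geodesics is the angle between their tangent complex lines. This agrees with the description of the complex angle in Subsection~\ref{subsection:angle}, and with Proposition~\ref{thm:polar-formula} when the lines intersect.

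\textbf{Forward direction.} Suppose \(L^{\mathbb C}_1\) and \(L^{\mathbb C}_2\) are orthogonal. Then their complex angle is \(\pi/2\), so \(\cos\phi=|\langle u_1,u_2\rangle|=0\), and hence \(\langle u_1,u_2\rangle=0\). In particular \(\cos\theta=\operatorname{Re}\langle u_1,u_2\rangle=0\), so \(\theta=\pi/2\). This is exactly the ``Orthogonality'' paragraph of Section~\ref{sec:angles}: Hermitian orthogonality forces \(\phi=\mu=\theta=\pi/2\).

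If I wanted an argument not relying on the tangent-space identification of the complex angle, I would lift to \(\mathbb C^{2,1}\) instead. Take polar vectors \(\mathbf n_1,\mathbf n_2\) with \(\langle\mathbf n_1,\mathbf n_2\rangle=0\), so that \(N=0\) in Proposition~\ref{thm:polar-formula}. Then \(\mathbf n_1,\mathbf n_2\) together with a negative lift \(\mathbf x\) of \(x\) form an orthogonal basis. Since \(L^{\mathbb C}_1=\mathbb P(\mathbf n_1^\perp)\) and \(\mathbf n_1^\perp=\operatorname{span}(\mathbf x,\mathbf n_2)\), the tangent direction of \(L^{\mathbb C}_1\) at \(x\) corresponds to \(\mathbf n_2\), and similarly the tangent direction of \(L^{\mathbb C}_2\) corresponds to \(\mathbf n_1\). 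The tangent Hermitian product at \(x\) is represented by \(\langle\cdot,\cdot\rangle\) restricted to \(\mathbf x^\perp\) (up to a positive factor). So any \(u_1\in\mathbb C\mathbf n_2\) and \(u_2\in\mathbb C\mathbf n_1\) satisfy \(\langle u_1,u_2\rangle=0\), and hence \(\operatorname{Re}\langle u_1,u_2\rangle=0\).

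\textbf{Converse fails.} I would give an explicit counterexample using the normal form of Section~\ref{sec:angles}. Take \(u_1=(1,0)^\top\) and \(u_2=(i\cos\phi,\sin\phi)^\top\) with \(0\le\phi<\pi/2\). Then \(\langle u_1,u_2\rangle=-i\cos\phi\), which is purely imaginary, so \(\theta=\pi/2\), while \(|\langle u_1,u_2\rangle|=\cos\phi\neq0\). Realize this in the ball model at the origin. The geodesics through \(0\) in the directions \(u_1\) and \(u_2\) meet at a Riemannian right angle, but their complex geodesics meet at angle \(\phi\neq\pi/2\). The extreme case \(\phi=0\) takes \(u_2=iu_1\): both geodesics lie in the same complex geodesic and meet at a right angle there. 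This is consistent with the relation \(\phi=\mu\) at \(\theta=\pi/2\) from \eqref{eq:angle-relation}.

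The only point needing care is the identification of the tangent Hermitian product at \(x\) with the restriction of the form on \(\mathbf x^\perp\), up to a positive constant. This is standard (\cite{Goldman1999}); it is what lets the tangent-space and polar-vector descriptions of orthogonality agree. Everything else is immediate.
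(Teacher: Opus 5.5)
Your proof is correct and follows the paper's argument: complex orthogonality makes the tangent Hermitian product vanish, so its real part vanishes too, and the converse fails by an explicit tangent-vector example at the origin of the ball. Your family \(u_2=(i\cos\phi,\sin\phi)^{\top}\) contains the paper's example \((1,0)\), \((i,0)\) as the case \(\phi=0\). Your optional polar-vector argument only spells out the identification of complex orthogonality with vanishing tangent Hermitian product, which the paper uses without comment.
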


\begin{proof}
Complex orthogonality means that the Hermitian product of their
tangents vanishes. Its real part therefore vanishes as well.
For the converse, the tangent vectors \((1,0)\) and \((i,0)\)
at the origin of the ball are Riemannian orthogonal but span the
same complex line.
\end{proof}

For the closure argument in Lemma \ref{thm:closure} we also need the following rigidity statement.

\begin{lem}\label{lem:rigidity}
	Let \(g\in\Pu\) preserve two intersecting oriented real geodesics
	whose complex spans are distinct. Then \(g\) is the identity. Any
	lift of \(g\) to \(\Su\) is a scalar cube root of the identity.
\end{lem}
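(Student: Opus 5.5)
The plan is to pass to the lifted linear picture and exploit the fact that $g$ must fix the common point together with two distinct complex directions there. Let $\vec L_1,\vec L_2$ be the two oriented real geodesics and $x=\vec L_1\cap\vec L_2$, and let $g\in\Pu$ preserve both as oriented lines.

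\emph{Step 1: $g$ fixes $x$ and both tangent vectors.} Since $g$ preserves $L_1$ and $L_2$ setwise, it preserves $L_1\cap L_2=\{x\}$, so $g(x)=x$. Because $g$ also preserves the orientations, $dg_x$ sends the positive unit tangent $u_1$ of $\vec L_1$ to a positive unit tangent of $\vec L_1$ at $x$, hence $dg_x u_1=u_1$. The same argument gives $dg_x u_2=u_2$.

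\emph{Step 2: linear algebra at $x$.} The isotropy group of $x$ in $\Pu$ is $\mathrm{U}(2)$, acting complex linearly and unitarily on $T_x\HdC\cong\mathbb C^2$. The complex spans $\mathbb C u_1$ and $\mathbb C u_2$ are distinct by hypothesis, so $u_1,u_2$ are complex-linearly independent and form a complex basis of $T_x\HdC$. A complex linear map fixing a basis is the identity, so $dg_x=\mathrm{Id}$. An isometry of a connected Riemannian manifold that fixes a point and has identity differential there is the identity (it commutes with the exponential map). Hence $g=\mathrm{Id}$.

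\emph{Step 3: lifts.} A lift $\tilde g\in\Su$ of the identity of $\Pu$ lies in the kernel of $\Su\to\Pu$, which is $\{I,\omega I,\omega^2I\}$ by the description of the isometry group in Section~\ref{sec:models}.

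If a more concrete argument is wanted, one can work in coordinates instead. Put $x$ at the origin of the ball, with lift $(0,0,1)^\top$. A lift fixing $x$ has the form $\operatorname{diag}(U,c)$ with $U\in\mathrm{U}(2)$ and $|c|=1$, and it acts on $T_0$ by $c^{-1}U$. Fixing the two independent vectors $u_1,u_2$ forces $U=cI$, so the lift is $cI$ with $c^3=1$.

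The only delicate point is Step 1. Orientation preservation is needed to obtain $dg_xu_j=u_j$ rather than $-u_j$; without it the map $-\mathrm{Id}$ on $T_x$, realized by the lift $\operatorname{diag}(-1,-1,1)$, would be a counterexample. Distinctness of the complex spans is needed in Step 2; if the spans coincide, rotations in the complex normal direction survive.
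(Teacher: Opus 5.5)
Your proof is correct and follows the paper's argument: fix the intersection point, use that its isotropy group acts on the tangent space as $\mathrm{U}(2)$, and observe that orientation preservation fixes the two complex-linearly independent unit tangents, which forces the identity. Your Step~3 and the $\operatorname{diag}(U,c)$ computation make explicit the statement about lifts, which the paper's proof leaves implicit (its subsequent coordinate check treats only the complex-orthogonal case), and your version covers every pair with distinct complex spans.
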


\begin{proof}
The unique intersection point is fixed by $g$. Move it to the origin of
the ball model, where its isotropy group acts on the tangent space
as \(\mathrm U(2)\). Since the geodesics are oriented, their positive
unit tangents are fixed individually. These two vectors are
complex-linearly independent, so the unitary transformation is the
identity.
\end{proof}

In the complex-orthogonal case, the same conclusion can be seen
directly in the original ball coordinates. Take the oriented axes
\[
\vec{L}_1=\{(x,0):-1<x<1\},\qquad
\vec{L}_2=\{(0,x):-1<x<1\},
\]
with their positive coordinate orientations. The polar vectors of
their complex spans are
\begin{equation}\label{eq:polar-vector}
\mathbf n_1=\begin{pmatrix}0\\1\\0\end{pmatrix},\qquad
\mathbf n_2=\begin{pmatrix}1\\0\\0\end{pmatrix}.
\end{equation}
A lift of \(g\) in  \(\Su\) preserving both lines fixes their
intersection projectively and preserves both polar lines,
 so it has the form
\[
\operatorname{diag}\bigl(e^{i\theta_1},e^{i\theta_2},
e^{-i(\theta_1+\theta_2)}\bigr).
\]
Its affine action on the two coordinate axes multiplies them by
\(e^{i(2\theta_1+\theta_2)}\) and
\(e^{i(\theta_1+2\theta_2)}\), respectively.
Preserving both real orientations requires
\[
2\theta_1+\theta_2\equiv0,\qquad
\theta_1+2\theta_2\equiv0\pmod{2\pi}.
\]
Hence \(\theta_1\equiv\theta_2\) and
\(3\theta_1\equiv0\pmod{2\pi}\), giving a scalar cube root
of the identity. This coordinate computation uses complex
orthogonality; the preceding tangent-space proof covers all
pairs with distinct complex spans.

The distinct-span hypothesis excludes \(\theta=0\). In that case
both lines lie in one complex geodesic, and rotation in its complex
normal direction fixes both lines pointwise.

\section{The Delambre--Gauss formulas}
\label{sec:transport}


In this section, we prove Theorem~\ref{thm:main}.
We first find an isometry preserving each oriented side and
carrying the preceding oriented side to the following one.
The closure argument tracks two initial lines under these six maps.
Equations~\eqref{eq:tau} and~\eqref{eq:eta} are the main matrix identities we need for the proof of Theorem~\ref{thm:main}.
We use the notation and calculations from \cite[Lemma A.8]{TanWongZhang}.

\begin{lem}[Side maps and closure]\label{thm:closure}

For a hexagon \((\vec{L}_n)_{n=1}^6\) satisfying the hypotheses
of Theorem~\ref{thm:main}, there are unique side maps
\[
\tau_n \in \Pu,\qquad n=1,\ldots,6,
\]
satisfying
\[
\tau_n(\vec{L}_n)=\vec{L}_n
\quad\text{and}\quad
\tau_n(\vec{L}_{n-1})=\vec{L}_{n+1},
\]
with indices modulo \(6\). Set
\[
\eta_n := (\tau_{n-1}\cdots \tau_1)^{-1}\tau_n(\tau_{n-1}\cdots \tau_1).
\]
Then
\begin{equation}
\label{eq:tau}
\tau_6\tau_5\tau_4\tau_3\tau_2\tau_1=\mathrm{id},
\end{equation}
and
\begin{equation}
\label{eq:eta}
\eta_1\eta_2\eta_3\eta_4\eta_5\eta_6=\mathrm{id}.
\end{equation}
Furthermore,
\[
\begin{aligned}
\eta_n(\vec{L}_1)&=\vec{L}_1 &&\text{for } n=1,3,5,\\
\eta_n(\vec{L}_6)&=\vec{L}_6 &&\text{for } n=2,4,6.
\end{aligned}
\]

\end{lem}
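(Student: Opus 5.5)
Three things have to be proved: the side maps exist and are unique, the product identity \eqref{eq:tau} holds, and the conjugated identity \eqref{eq:eta} holds together with the stated invariance properties.

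\emph{Existence.} Fix \(n\). The map \(\tau_n\) must preserve the oriented line \(\vec L_n\) and send \(x_{n-1}=\vec L_{n-1}\cap\vec L_n\) to \(x_n=\vec L_n\cap\vec L_{n+1}\). First take the transvection along \(\vec L_n\) with signed displacement \(\ell_n\), which carries \(x_{n-1}\) to \(x_n\). Next compose with an element of the stabilizer of \(\vec L_n\) that fixes \(x_n\) pointwise along \(\vec L_n\); this is the rotation in the complex normal direction, a circle's worth of maps. After the transvection, the image of \(\vec L_{n-1}\) is an oriented geodesic through \(x_n\) with unit tangent \(w\). At \(x_n\) I decompose \(T_{x_n}=\mathbb C u_n\oplus(\mathbb C u_n)^\perp\). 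The normal rotation fixes \(u_n\) and multiplies the complementary line by \(e^{i\psi}\). The tangent \(u_{n+1}\) satisfies \(\langle u_{n+1},u_n\rangle=\overline{(-1)^{n-1}i\cos\theta}\). By \eqref{eq:signed-angle} at \(x_{n-1}\), and because parallel transport along the geodesic is unitary and preserves \(u_n\), the vector \(w\) has \(\langle w,u_n\rangle=(-1)^{n-2}i\cos\theta\), up to the conjugation convention.

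Here the alternating sign is used. Compare \(w\) with \(u_{n+1}\): the two must have the same \(\mathbb C u_n\)-component. One has to check carefully that the opposite signs at consecutive vertices, combined with the switch from \(\langle u_{n-1},u_n\rangle\) to \(\langle u_n,u_{n+1}\rangle\) (which conjugates the product), make these components agree exactly. I expect this sign bookkeeping to be the main obstacle. It is why \eqref{eq:signed-angle} is imposed rather than just the unsigned angle \(\theta\).

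Once the \(\mathbb C u_n\)-components agree, the \((\mathbb C u_n)^\perp\)-components both have modulus \(\sin\theta>0\), since \(\theta>0\). So a unique normal rotation \(e^{i\psi}\) carries \(w\) to \(u_{n+1}\), and hence carries the image of \(\vec L_{n-1}\) onto \(\vec L_{n+1}\) as an oriented geodesic. This proves existence.

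\emph{Uniqueness.} If \(\tau_n\) and \(\tau_n'\) both satisfy the conditions, then \(\tau_n^{-1}\tau_n'\) preserves the oriented lines \(\vec L_{n-1}\) and \(\vec L_n\). Their complex spans are distinct because \(\theta\neq0\), so Lemma~\ref{lem:rigidity} gives \(\tau_n^{-1}\tau_n'=\mathrm{id}\).

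\emph{Closure \eqref{eq:tau}.} Set \(g=\tau_6\cdots\tau_1\). By induction, \(\tau_k\cdots\tau_1\) sends \(\vec L_6\mapsto\vec L_k\) and \(\vec L_1\mapsto\vec L_{k+1}\). The base case holds since \(\tau_1\) fixes \(\vec L_1\) and sends \(\vec L_0=\vec L_6\) to \(\vec L_2\). For the inductive step, \(\tau_{k+1}\) sends \(\vec L_k\mapsto\vec L_{k+2}\) and fixes \(\vec L_{k+1}\). Taking \(k=6\), the map \(g\) preserves the oriented lines \(\vec L_6\) and \(\vec L_1\) (indices mod \(6\)). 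These meet at \(x_6\) with distinct complex spans, so Lemma~\ref{lem:rigidity} gives \(g=\mathrm{id}\).

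\emph{Identity \eqref{eq:eta}.} Write \(T_n=\tau_n\cdots\tau_1\), so that \(\eta_n=T_{n-1}^{-1}\tau_nT_{n-1}\) and \(\eta_1\cdots\eta_n=T_n\) telescopes, the inner factors cancelling. Hence \(\eta_1\cdots\eta_6=T_6=\mathrm{id}\).

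\emph{Invariance of \(\vec L_1\) and \(\vec L_6\).} The map \(\eta_n\) preserves \(T_{n-1}^{-1}(\vec L_n)\). For odd \(n=2m+1\), the induction above gives \(T_{n-1}(\vec L_1)=\vec L_n\). For even \(n\), it gives \(T_{n-1}(\vec L_6)=\vec L_{n-1}\), which is not the required line. So the even case needs the other parity of the induction: \(T_{n-1}\) carries \(\vec L_6\mapsto\vec L_{n-1}\) and \(\vec L_1\mapsto\vec L_n\). On reflection, \(\vec L_1\mapsto\vec L_n\) holds for all \(n\). I would therefore recheck the indexing and, if needed, use \(\tau_6\cdots\tau_1=\mathrm{id}\) to rewrite \(T_{n-1}^{-1}=\tau_6\cdots\tau_n\), so that \(T_{n-1}^{-1}(\vec L_n)=\tau_6\cdots\tau_{n+1}(\vec L_n)\). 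Tracking backwards from there should land on \(\vec L_6\) for \(n\) even (using \(\tau_6\) on \(\vec L_5\)) and on \(\vec L_1\) for \(n\) odd. I would settle this index chase carefully.
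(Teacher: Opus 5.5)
Your overall route matches the paper's proof: transvection plus a normal rotation for existence, Lemma~\ref{lem:rigidity} for uniqueness and closure, and telescoping for $\eta_1\cdots\eta_n=T_n$. The sign check you flag as the main obstacle is in fact immediate from what you wrote. Indeed, $\overline{(-1)^{n-1}i\cos\theta}=(-1)^{n}i\cos\theta=(-1)^{n-2}i\cos\theta$, which is the paper's identity $\langle u_n,u_{n-1}\rangle_{x_{n-1}}=\langle u_n,u_{n+1}\rangle_{x_n}$. It is also consistent across the wrap from $6$ to $1$, because $6$ is even.

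The genuine gap is in the line-tracking. You claim that $T_k=\tau_k\cdots\tau_1$ sends $\vec L_6\mapsto\vec L_k$ and $\vec L_1\mapsto\vec L_{k+1}$. This is false for odd $k$: your own base case gives $T_1(\vec L_1)=\vec L_1$ and $T_1(\vec L_6)=\vec L_2$, which is the opposite assignment. The inductive step does not reproduce the claim either. Since $\tau_{k+1}$ fixes $\vec L_{k+1}$ and moves $\vec L_k$ to $\vec L_{k+2}$, the hypothesis at $k$ yields $T_{k+1}(\vec L_1)=\vec L_{k+1}$ and $T_{k+1}(\vec L_6)=\vec L_{k+2}$. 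So the roles of $\vec L_1$ and $\vec L_6$ swap at every step. Your claim happens to be true for even $k$, so the conclusion at $k=6$ is correct, but the induction as written does not prove it. Tracking only the unordered pair $\{\vec L_1,\vec L_6\}$ would not suffice either: it cannot rule out $g$ interchanging the two lines, a case Lemma~\ref{lem:rigidity} does not cover.

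The same error is why your last part stalls. Your fallback assertion that $T_{n-1}(\vec L_1)=\vec L_n$ for all $n$ is false, since $T_1(\vec L_1)=\vec L_1\neq\vec L_2$. The correct statement, which is exactly the paper's two tracking chains, alternates with parity:
\begin{itemize}
\item for even $k$ (including $T_0=\mathrm{id}$), $T_k(\vec L_1)=\vec L_{k+1}$ and $T_k(\vec L_6)=\vec L_k$;
\item for odd $k$, $T_k(\vec L_1)=\vec L_k$ and $T_k(\vec L_6)=\vec L_{k+1}$.
\end{itemize}
At $k=6$ this gives $g(\vec L_1)=\vec L_1$ and $g(\vec L_6)=\vec L_6$, hence closure by rigidity.

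For the invariance statements, take odd $n$: then $n-1$ is even and $T_{n-1}(\vec L_1)=\vec L_n$. For even $n$, $n-1$ is odd and $T_{n-1}(\vec L_6)=\vec L_n$. In both cases $\eta_n=T_{n-1}^{-1}\tau_nT_{n-1}$ preserves $T_{n-1}^{-1}(\vec L_n)$, because $\tau_n$ preserves $\vec L_n$.

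The detour through $T_{n-1}^{-1}=\tau_6\cdots\tau_n$ is unnecessary. If you do use it, note that the move $\tau_6(\vec L_5)=\vec L_1$ belongs to the odd case. For even $n<6$ the chain ends with $\tau_5(\vec L_4)=\vec L_6$, followed by $\tau_6$ fixing $\vec L_6$.
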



\begin{proof}
First consider the side \(L_n\). Translate along it from \(x_{n-1}\)
to \(x_n\), preserving its positive tangent. At these endpoints,
\eqref{eq:signed-angle} gives the same Hermitian product of this
tangent with the positive tangents to \(L_{n-1}\) and \(L_{n+1}\):
\[
\langle u_n,u_{n-1}\rangle_{x_{n-1}}
=\langle u_n,u_{n+1}\rangle_{x_n}.
\]
Consequently, after translation, these transverse tangents have
the same component along the complex tangent to \(L_n\).
Their remaining components lie in its one-dimensional complex
orthogonal complement and both have norm \(\sin\theta>0\).
A unique unitary rotation of this complement takes one to the other.
Composing this rotation with the translation gives \(\tau_n\);
uniqueness of  \(\tau_n\) also follows from Lemma~\ref{lem:rigidity}.

We now prove that \(\tau_6\cdots \tau_2\tau_1=\mathrm{id}\). Set
\[
\tau=\tau_6\tau_5\tau_4\tau_3\tau_2\tau_1.
\]
A direct check shows that
\[
\tau(\vec{L}_6)=\vec{L}_6
\quad\text{and}\quad
\tau(\vec{L}_1)=\vec{L}_1.
\]
Indeed, the action of \(\tau\) on these two lines can be read off from the following chain:
\[
\begin{aligned}
\vec{L}_1
&\xrightarrow{\tau_1} \vec{L}_1
\xrightarrow{\tau_2} \vec{L}_3
\xrightarrow{\tau_3} \vec{L}_3
\xrightarrow{\tau_4} \vec{L}_5
\xrightarrow{\tau_5} \vec{L}_5
\xrightarrow{\tau_6} \vec{L}_1,\\
\vec{L}_6
&\xrightarrow{\tau_1} \vec{L}_2
\xrightarrow{\tau_2} \vec{L}_2
\xrightarrow{\tau_3} \vec{L}_4
\xrightarrow{\tau_4} \vec{L}_4
\xrightarrow{\tau_5} \vec{L}_6
\xrightarrow{\tau_6} \vec{L}_6.
\end{aligned}
\]
The two initial lines have distinct complex spans because \(\theta>0\).
Lemma~\ref{lem:rigidity} therefore gives \(\tau=\mathrm{id}\).

To make the order of multiplication explicit, the first three
conjugated factors satisfy
\[
\begin{aligned}
\eta_1\eta_2
 &=\tau_1(\tau_1^{-1}\tau_2\tau_1)=\tau_2\tau_1,\\
\eta_1\eta_2\eta_3
 &=\tau_2\tau_1(\tau_2\tau_1)^{-1}
   \tau_3(\tau_2\tau_1)=\tau_3\tau_2\tau_1.
\end{aligned}
\]
Inductively, \(\eta_1\cdots\eta_n=\tau_n\cdots\tau_1\).
At \(n=6\) this gives the asserted closure.

For odd \(n\), the same tracking chains show that
\(\tau_{n-1}\cdots\tau_1\) sends \(\vec L_1\) to \(\vec L_n\).
Since \(\tau_n\) preserves \(\vec L_n\), we obtain
\[
\begin{aligned}
\eta_n(\vec L_1)
 &=(\tau_{n-1}\cdots\tau_1)^{-1}
   \tau_n(\tau_{n-1}\cdots\tau_1)(\vec L_1)\\
 &=(\tau_{n-1}\cdots\tau_1)^{-1}(\vec L_n)=\vec L_1.
\end{aligned}
\]
For even \(n\), the product \(\tau_{n-1}\cdots\tau_1\)
instead sends \(\vec L_6\) to \(\vec L_n\), and the same calculation
gives
\[
\eta_n(\vec L_6)
=(\tau_{n-1}\cdots\tau_1)^{-1}(\vec L_n)=\vec L_6.
\]
All these statements concern the oriented complete lines.
\end{proof}

\begin{proof}[Proof of Theorem~\ref{thm:main}]

We use the Siegel domain model of the complex hyperbolic plane.
The alternating stabilizers in Lemma~\ref{thm:closure} put \(\vec L_1\) and \(\vec L_6\)
into normal positions. Set
\[
\vec L_1=\vec L_{[0,\infty]},
\]
where \(\vec L_{[0,\infty]}\) is the oriented real geodesic from $0$ to $\infty$ in \(\mathbf H^{2}_{\mathbb C}\).
The second reference line is \(\vec L_6=Q(\vec L_1)\), with \(Q \in \Pu\)
given below. The signed angle condition ensures that this normalization
preserves both orientations.
For \(n=1,3,5\), the isometries \(\eta_n\in\mathrm{PU}(2,1)\)
are represented by
\[
A_n=
\begin{pmatrix}
\lambda_n&0&0\\
0&\overline{\lambda_n}\,\lambda_n^{-1}&0\\
0&0&\overline{\lambda_n}^{-1}
\end{pmatrix},
\qquad
\lambda_n=e^{\ell_n/2-i\phi_n/3},
\]
where \(\lambda_n\) is as in Subsection~\ref{subsection:sideparameters},
and comes from the map \(\tau_n\) of Lemma~\ref{thm:closure}.

These matrices fix \(0\) and \(\infty\). Let \(\mathbf p_1\) and \(\mathbf q_1\)
be the lifts of \(0\) and \(\infty\) given by
\[
\mathbf p_1=
\begin{pmatrix}0\\0\\-1\end{pmatrix},
\qquad
\mathbf q_1=
\begin{pmatrix}1\\0\\0\end{pmatrix},
\]
so that \(\langle {\mathbf p}_1,\mathbf q_1\rangle=-1\). The geodesic $\vec L_1$ with
endpoints \(0\) and \(\infty\) is parametrized by 
\[
\gamma_1(s)=e^{s/2}\mathbf q_1+e^{-s/2}\mathbf p_1.
\]

For the remaining sides, let
\begin{equation}\label{eq:Q}
Q=
\begin{pmatrix}
\dfrac{1-i\cos\theta}{2}&\dfrac{\sin\theta}{\sqrt2}&\dfrac{-1-i\cos\theta}{2}\\[4mm]
\dfrac{\sin\theta}{\sqrt2}&-i\cos\theta&\dfrac{\sin\theta}{\sqrt2}\\[4mm]
\dfrac{-1-i\cos\theta}{2}&\dfrac{\sin\theta}{\sqrt2}&\dfrac{1-i\cos\theta}{2}
\end{pmatrix}.
\end{equation}
One checks that \(Q^*H_SQ=H_S\), \(Q^{-1}=Q^*\), and
\(\det Q=-1\). Thus \(Q\) represents a projective unitary isometry,
and conjugation by \(Q\) preserves \(\Su\).
For \(n=2,4,6\), define
\[
A_n=
Q
\begin{pmatrix}
\lambda_n&0&0\\
0&\overline{\lambda_n}\lambda_n^{-1}&0\\
0&0&\overline{\lambda_n}^{-1}
\end{pmatrix}
Q^{-1},
\]
where \(\lambda_n\) is as in Subsection~\ref{subsection:sideparameters},
and comes from the map \(\tau_n\) of Lemma~\ref{thm:closure}.
Then \(A_n\) fixes \(Q(0)\) and \(Q(\infty)\). Let
\(\mathbf p_2=Q(\mathbf p_1)\) and \(\mathbf q_2=Q(\mathbf q_1)\) be the lifts
of \(Q(0)\) and \(Q(\infty)\), so that
\[
\mathbf p_2=
\begin{pmatrix}
\dfrac{1+i\cos\theta}{2}\\[4mm]
-\dfrac{\sin\theta}{\sqrt2}\\[4mm]
\dfrac{-1+i\cos\theta}{2}
\end{pmatrix},
\qquad
\mathbf q_2=
\begin{pmatrix}
\dfrac{1-i\cos\theta}{2}\\[4mm]
\dfrac{\sin\theta}{\sqrt2}\\[4mm]
\dfrac{-1-i\cos\theta}{2}
\end{pmatrix},
\]
with \(\langle {\mathbf p}_2, {\mathbf q}_2\rangle=-1\). The geodesic $\vec L_6$ with
endpoints \(Q(0)\) and \(Q(\infty)\) is parametrized by
\[ 
\gamma_2(s)=e^{s/2}\mathbf q_2+e^{-s/2}\mathbf p_2.
\]

The point
\[
\mathbf p_0=
\begin{pmatrix}1\\0\\-1\end{pmatrix}
\]
is the intersection point of \(\gamma_1\) and \(\gamma_2\) at \(s=0\). Indeed,
\(\gamma_1(0)=\mathbf p_1+\mathbf q_1=\mathbf p_0\) and
\(\gamma_2(0)=\mathbf p_2+\mathbf q_2=\mathbf p_0\).
The complex spans of the geodesics \(\gamma_1\) and \(\gamma_2\) have
positive unit polar vectors \(\mathbf c_1\) and \(\mathbf c_2\), respectively, where
\[
\mathbf c_1=\begin{pmatrix}0\\1\\0\end{pmatrix},\qquad
\mathbf c_2=Q \mathbf c_1=
\begin{pmatrix}
\sin\theta/\sqrt2\\ -i\cos\theta\\ \sin\theta/\sqrt2
\end{pmatrix}.
\]
Indeed, \(\mathbf c_i\) is orthogonal to both \(\mathbf p_i\) and
\(\mathbf q_i\), and \(\langle \mathbf c_i, \mathbf c_i\rangle=1\).
Since \(|\langle \mathbf c_1, \mathbf c_2\rangle|=\cos\theta\), the complex
angle of these two complex geodesics is \(\theta\).

For the Riemannian angle of $\{\gamma_1,\gamma_2\}$  and its orientation sign, apply the Cayley
transform \(C\)  from the Siegel model to the ball model in Section~\ref{sec:models}.
The lift \(\widetilde{\mathbf p}_0=\mathbf p_0/\sqrt2\) has norm
\(-1\), and
\[
C\mathbf p_0=\begin{pmatrix}0\\0\\\sqrt2\end{pmatrix},\quad
C\mathbf q_1=\frac1{\sqrt2}\begin{pmatrix}1\\0\\1\end{pmatrix},\quad
C\mathbf q_2=\frac1{\sqrt2}
\begin{pmatrix}-i\cos\theta\\ \sin\theta\\1\end{pmatrix}.
\]
Thus \(p_0\) goes to the origin of the ball.
The positive directions of \(\gamma_1\) and \(\gamma_2\), toward
\(q_1\) and \(q_2\) respectively, are represented by
\[
v_1=\begin{pmatrix}1\\0\end{pmatrix},\qquad
v_2=\begin{pmatrix}-i\cos\theta\\ \sin\theta\end{pmatrix}.
\]
In a unitary tangent frame, \(\langle v_1,v_2\rangle=i\cos\theta\).
Its real part is zero and its modulus is \(\cos\theta\).
Moreover, \(\langle v_2,v_1\rangle=-i\cos\theta\), as required
for the ordered pair \((\vec L_6, \vec L_1)\) in \eqref{eq:signed-angle}.

The closure relation in Lemma~\ref{thm:closure} holds in \(\Pu\).
Its lifts therefore satisfy
\begin{equation}
A_1A_2A_3A_4A_5A_6=\varepsilon I,\qquad \varepsilon^3=1.
\label{eq:product-epsilon}
\end{equation}
For each \(n\in\{1,2,\dots,6\}\), \(A_n\) is a matrix depending on
the parameters \(\theta\) and \(\lambda_n\), so we write
\(A_n=A_n(\theta,\lambda_n)\).
Since \(A_n(\theta,\lambda)^{-1}=A_n(\theta,\lambda^{-1})\),
moving the last three factors of \eqref{eq:product-epsilon} to the right gives
\eqref{eq:transport-main}, with inverse factors in the order \(6,5,4\). This ends the proof of Theorem~\ref{thm:main}
\end{proof}

\begin{rem}\label{rem:scalar-dg}
Equating the entries of \eqref{eq:transport-main} gives nine
scalar Delambre--Gauss identities. They need not be independent:
the equation expresses closure in the eight-real-dimensional
group \(\Pu\), with a central lift in \(\Su\).
The matrix form retains the multiplication order and displays
the dependence on \(\theta\) without reproducing lengthy
entry expansions.
\end{rem}

\section{Right-angled hexagons in the real hyperbolic plane and the complex hyperbolic line}
\label{sec:real-hyperbolic-degeneration}

In this section, we distinguish two geometrically different cases,
namely the two degenerate cases of Theorem~\ref{thm:main}.
If the complex angle \(\theta\) is \(\pi/2\), then the right-angled
hexagon \((\vec{L}_i)_{i=1}^6\) is contained in a totally real plane.
If the complex angle \(\theta\) is \(0\), then consecutive sides lie in
the same complex geodesic, so the entire hexagon lies in that
complex geodesic. The latter case is excluded from
Theorem~\ref{thm:main}.

\subsection{Right-angled hexagons in the real hyperbolic plane}

A totally real, totally geodesic plane \(\mathcal P\) in \(\mathbf H^{2}_{\mathbb C}\) has curvature \(-1/4\)
for the restricted Bergman metric. If \(d_{\mathcal P}\) denotes
this restricted distance and \(d_{-1}\) the distance after
normalizing its curvature to \(-1\), then
\[
d_{\mathcal P}=2d_{-1}.
\]
This factor must be taken into account when comparing side lengths
in the two matrix descriptions of real hyperbolic geometry: one
given by \(\mathrm{SU}(1,1) < \mathrm{SU}(2,1)\) and the other by \(\mathrm{SO}_0(2,1) < \mathrm{SU}(2,1)\). 
Recall that the real hyperbolic plane \(\mathbf H^{2}_{\mathbb R}\) has the Poincar\'e disc model and the Klein disc model.
 Let $\Phi$ be the map from the Poincar\'e disc to the Klein disc given by
\[
\Phi: z=x+iy\mapsto \left(\frac{2x}{1+x^2+y^2},  \frac{2y}{1+x^2+y^2}\right).
\]
Here both disc models have curvature \(-1\); \(\Phi\) does not include
the rescaling to the restricted Bergman metric.
Passing between the two models gives the following homomorphism;
see~\cite[Section~4.1 and Exercise~4.2.1]{Par2008}.

\begin{prop} \label{prop:realhypplane1}
There is a homomorphism $\rho : \mathrm{SU}(1,1) \longrightarrow \mathrm{SO}_0(2,1)$, with kernel $\{\pm I\}$, given on generators, for real $\lambda$ and $\theta$, by
\[
\rho\!\begin{pmatrix}
\cosh \lambda & \sinh \lambda \\[2pt]
\sinh \lambda & \cosh \lambda
\end{pmatrix}
=
\begin{pmatrix}
\cosh 2\lambda & 0 & \sinh 2\lambda \\[2pt]
0 & 1 & 0 \\[2pt]
\sinh 2\lambda & 0 & \cosh 2\lambda
\end{pmatrix},
\]
\[
\rho\!\begin{pmatrix}
e^{i\theta} & 0 \\[2pt]
0 & e^{-i\theta}
\end{pmatrix}
=
\begin{pmatrix}
\cos 2\theta & -\sin 2\theta & 0 \\[2pt]
\sin 2\theta & \cos 2\theta & 0 \\[2pt]
0 & 0 & 1
\end{pmatrix}.
\]
\end{prop}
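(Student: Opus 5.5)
The plan is to verify the homomorphism through an explicit equivariant identification of the two models, not by checking generators in isolation; once a homomorphism is known to exist, the displayed values on generators are just computations. Concretely, I would realise $\mathrm{SU}(1,1)$ as the group preserving the Hermitian form $|w_1|^2-|w_2|^2$ on $\mathbb C^2$. I would then let it act by conjugation, $X\mapsto gXg^*$, on the real $3$-dimensional space $\mathcal V$ of $2\times2$ matrices $X$ that are Hermitian with respect to that form, meaning $JX$ is Hermitian for $J=\operatorname{diag}(1,-1)$, and that have zero trace. A suitable real quadratic form on $\mathcal V$ is $q(X)=-\det X$, up to sign, and it has signature $(2,1)$.

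The steps, in order, are as follows.

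First, choose an explicit basis of $\mathcal V$, namely
\[
X_1=\begin{pmatrix}0&1\\-1&0\end{pmatrix},\quad X_2=\begin{pmatrix}0&i\\ i&0\end{pmatrix},\quad X_3=\begin{pmatrix}1&0\\0&-1\end{pmatrix},
\]
possibly adjusted by signs so that $q$ becomes $x_1^2+x_2^2-x_3^2$. Since $\det(gXg^{-1})=\det X$, with $g^{-1}$ expressed through $g^*$ and $J$, the conjugation action preserves $q$. It therefore gives a continuous homomorphism $\rho:\mathrm{SU}(1,1)\to \mathrm O(2,1)$. Because $\mathrm{SU}(1,1)$ is connected, the image lies in the identity component $\mathrm{SO}_0(2,1)$.

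Second, compute the kernel. An element $g$ in the kernel commutes with all of $\mathcal V$, and $\mathcal V$ spans $M_2(\mathbb C)$ over $\mathbb C$, so $g$ is a scalar. Together with $\det g=1$ this forces $g=\pm I$.

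Third, evaluate $\rho$ on the two generators. For the diagonal element $\operatorname{diag}(e^{i\theta},e^{-i\theta})$, conjugation fixes $X_3$ and multiplies the off-diagonal entries by $e^{\pm 2i\theta}$. This rotates the $(X_1,X_2)$-plane by $2\theta$, which is the doubled angle in the proposition. For the hyperbolic element $\begin{pmatrix}\cosh\lambda&\sinh\lambda\\ \sinh\lambda&\cosh\lambda\end{pmatrix}$, conjugation fixes one basis vector and acts on the other two as a boost. Using the double-angle formulas $\cosh^2\lambda+\sinh^2\lambda=\cosh2\lambda$ and $2\sinh\lambda\cosh\lambda=\sinh2\lambda$, the boost is by $2\lambda$. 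Reordering the basis vectors then reproduces exactly the displayed matrices.

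Finally, I would note that this $\rho$ is the map induced by $\Phi$. The null cone of $q$ projectivises to the unit circle, and conjugation corresponds to the Möbius action on the Poincaré disc transported by $\Phi$ to the Klein disc. This is the content of the cited reference. It fixes the geometric meaning of the construction, but it is not needed for the algebraic statement.

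The main obstacle is the bookkeeping of the basis and signs. Both displayed matrices have to come out in the same coordinates, with the rotation in the $(1,2)$-block and the boost in the $(1,3)$-block. I would first fix the rotation block using the diagonal generator, then determine the image of the boost. If the boost lands in the $(2,3)$-block instead, I would conjugate by a rotation by $\pi/2$ in the $(X_1,X_2)$-plane; this commutes with the diagonal rotations, so the rotation block is unchanged.
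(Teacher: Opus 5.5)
Your argument is correct, and it takes a different route from the paper. The paper gives no argument beyond the pointer to Parker's notes: $\rho$ is meant to come from transporting the M\"obius action of $\mathrm{SU}(1,1)$ on the Poincar\'e disc through $\Phi$ to the Klein disc, viewed as the projectivized hyperboloid.

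In that route the displayed values are read off geometrically. The map $z\mapsto e^{2i\theta}z$ is a rotation by $2\theta$. The boost sends $0$ to $\tanh\lambda$, which $\Phi$ sends to $(\tanh2\lambda,0)$, with hyperboloid lift $(\sinh2\lambda,0,\cosh2\lambda)$. However, the homomorphism property and the kernel then rest on two facts: orientation-preserving isometries of the Klein disc come from unique elements of $\mathrm{SO}_0(2,1)$, and only $\pm I$ acts trivially on the disc.

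Your adjoint-type construction makes the homomorphism property, the landing in the identity component, and the kernel immediate and purely algebraic. The price is that its coordinates are not canonical, so the normalization in the statement must be produced by a choice of basis. The link with $\Phi$ then becomes an optional check.

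Some details need correcting.
\begin{enumerate}
\item The action has to be $X\mapsto gXg^{-1}=gXJg^*J$, which is what your determinant argument and generator computations actually use. The literal formula $X\mapsto gXg^*$ does not preserve $\mathcal V$: for the boost $h$, $hX_2h^*=hX_2h$ has trace $2i\sinh2\lambda$.
\item In your basis $\det(x_1X_1+x_2X_2+x_3X_3)=x_1^2+x_2^2-x_3^2$, so take $q=\det$ rather than $-\det$.
\item Over $\mathbb C$, $\mathcal V$ spans the trace-zero matrices, not all of $M_2(\mathbb C)$. Commuting with these still forces $g$ to be scalar, so the kernel step survives.
\end{enumerate}

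The bookkeeping you flag resolves as follows. The diagonal generator gives exactly the displayed rotation block, while the boost gives
\[
hX_1h^{-1}=\cosh2\lambda\,X_1-\sinh2\lambda\,X_3,\qquad hX_2h^{-1}=X_2,\qquad hX_3h^{-1}=-\sinh2\lambda\,X_1+\cosh2\lambda\,X_3.
\]
So the boost already sits in the $(1,3)$-block, but with $-\sinh2\lambda$ off the diagonal. A rotation by $\pi/2$ would move it to the $(2,3)$-block, so that is the wrong fix. What you need is to replace $X_3$ by $-X_3$, or alternatively to rotate by $\pi$ in the $(X_1,X_2)$-plane. Either change preserves $q$ and commutes with the rotation block. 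In the basis $(X_1,X_2,-X_3)$ both displayed matrices come out exactly.
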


\begin{prop}\label{prop:realhypplane2}
There is a homomorphism $\rho_1 : \mathrm{SL}(2,\mathbb{R}) \longrightarrow \mathrm{SU}(1,1)$ given by
\[
\rho_1\!\begin{pmatrix}
a & b \\
c & d
\end{pmatrix}
=
\begin{pmatrix}
\alpha & \beta \\
\overline{\beta} & \overline{\alpha}
\end{pmatrix},
\]
where
\[
\alpha = \frac{1}{2}\bigl(a + d - i(b - c)\bigr),
\qquad
\beta = \frac{1}{2}\bigl(a - d + i(b + c)\bigr),
\]
with $a,b,c,d \in \mathbb{R}$.
\end{prop}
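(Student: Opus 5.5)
The statement is Proposition~\ref{prop:realhypplane2}. I would verify directly that the formula defines a homomorphism into \(\mathrm{SU}(1,1)\). The cleanest route is to exhibit it as a conjugation by a Cayley matrix. Concretely, put
\[
K=\frac{1}{\sqrt2}\begin{pmatrix}1&-i\\ 1&i\end{pmatrix},
\]
which maps the upper half-plane to the unit disc. The plan is to show that \(\rho_1(g)=KgK^{-1}\) for every \(g\in\mathrm{SL}(2,\mathbb R)\). Once this holds, multiplicativity is automatic, since \(\rho_1(gh)=KghK^{-1}=(KgK^{-1})(KhK^{-1})\). The determinant is also preserved, so \(\det\rho_1(g)=ad-bc=1\).

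The key steps, in order, are as follows.

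\emph{Step 1.} Compute \(K^{-1}=\frac{1}{\sqrt2}\begin{pmatrix}1&1\\ i&-i\end{pmatrix}\), using \(\det K=i\).

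\emph{Step 2.} Multiply out \(KgK^{-1}\) for \(g=\begin{pmatrix}a&b\\ c&d\end{pmatrix}\). This should give the upper-left entry \(\tfrac12(a+d-i(b-c))=\alpha\) and the upper-right entry \(\tfrac12(a-d+i(b+c))=\beta\). Because \(a,b,c,d\) are real, the lower row should then come out as \(\bar\beta,\bar\alpha\). If the sign conventions of \(K\) do not reproduce the stated \(\alpha,\beta\) exactly, I would replace \(K\) by its conjugate or by \(\begin{pmatrix}1&i\\ 1&-i\end{pmatrix}/\sqrt2\) and recompute until the entries match.

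\emph{Step 3.} Check that the image lies in \(\mathrm{SU}(1,1)\) with respect to \(\operatorname{diag}(1,-1)\). It has the form \(\begin{pmatrix}\alpha&\beta\\ \bar\beta&\bar\alpha\end{pmatrix}\), which preserves that form. Its determinant is \(|\alpha|^2-|\beta|^2\). I would confirm directly that
\[
|\alpha|^2-|\beta|^2=\tfrac14\bigl[(a+d)^2+(b-c)^2-(a-d)^2-(b+c)^2\bigr]=ad-bc=1,
\]
which gives an independent check that does not rely on the conjugation identity.

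As an alternative route, if one prefers not to pass through \(K\), multiplicativity can be checked by hand. One computes \(\alpha\) and \(\beta\) of a product \(gh\) and compares with \(\alpha_g\alpha_h+\beta_g\bar\beta_h\) and \(\alpha_g\beta_h+\beta_g\bar\alpha_h\). This is routine but longer.

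The only real obstacle is bookkeeping. The main risk is matching the sign and \(i\) conventions of the chosen Cayley matrix to the stated formulas for \(\alpha\) and \(\beta\). I would settle this first by testing the generator \(g=\operatorname{diag}(e^{t},e^{-t})\), which should map to \(\alpha=\cosh t\), \(\beta=\sinh t\). I would also test the rotation \(g=\begin{pmatrix}\cos s&\sin s\\ -\sin s&\cos s\end{pmatrix}\), which should map to \(\alpha=e^{-is}\), \(\beta=0\). These checks pin down \(K\) before doing the general computation. The kernel is trivial, since \(K\) is invertible, so the result is in fact an isomorphism onto \(\mathrm{SU}(1,1)\).
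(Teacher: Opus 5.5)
Your proposal is correct and matches the paper's argument, which is just the observation that \(\rho_1\) is conjugation by \(\frac{1}{\sqrt2}\begin{pmatrix}1&i\\1&-i\end{pmatrix}\), i.e.\ your fallback matrix. Your first choice \(K=\frac{1}{\sqrt2}\begin{pmatrix}1&-i\\1&i\end{pmatrix}\) actually produces \(\bar\alpha,\bar\beta\) in the top row, so your planned sign check is genuinely needed; your identity \(|\alpha|^2-|\beta|^2=ad-bc\) supplies the \(\mathrm{SU}(1,1)\) membership the paper leaves implicit (your closing surjectivity claim does not follow from invertibility of \(K\) alone, though it is not part of the statement).
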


The map \(\rho_1\) is conjugation by
\[
\frac{1}{\sqrt2}\begin{pmatrix}1&i\\1&-i\end{pmatrix},
\]
so it is a homomorphism. The composition \(\rho\circ\rho_1\)
relates the \(\mathrm{SL}(2,\mathbb R)\) and
\(\mathrm{SO}_0(2,1)\) descriptions of orientation-preserving real hyperbolic
isometries. Applying it to a \(2\times2\) closure identity sends
the central sign \(\pm I\) to the identity.

\begin{cor}[Totally real case]\label{cor:real-case}
If a right-angled hexagon in Theorem~\ref{thm:main} lies in a totally real,
totally geodesic plane \(\mathcal P\), then \(\theta=\pi/2\).
Its side maps in Lemma \ref{thm:closure}  preserve \(\mathcal P\), and their projective
\(3\times3\) closure identity is the real hyperbolic closure
identity in the projective Lorentz group $\mathrm{SO}_0(2,1)$, with
\(d_{\mathcal P}=2d_{-1}\).
\end{cor}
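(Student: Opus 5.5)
We first show $\theta=\pi/2$ by computing the Hermitian product of consecutive tangent vectors at a vertex. Then we show that each side map $\tau_n$ of Lemma~\ref{thm:closure} preserves $\mathcal P$. Finally we identify the stabilizer of $\mathcal P$ with the Lorentz group, where the closure identity becomes the real one.

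\emph{The complex angle.} Normalize $\mathcal P$ to the standard real plane: in the ball model, $\mathcal P=\mathbb B^2\cap\mathbb R^2$, the projectivization of the real points of $\mathbb C^{2,1}$ with $H_B=\operatorname{diag}(1,1,-1)$. At any point $x\in\mathcal P$ the tangent space $T_x\mathcal P$ is totally real. Hence for real tangent vectors $u,v\in T_x\mathcal P$ the Hermitian product $\langle u,v\rangle$ is real, equal to $g(u,v)$. At a vertex $x_j$ the sides meet at a Riemannian right angle, so $\langle u_j,u_{j+1}\rangle_{x_j}=0$. By \eqref{eq:signed-angle} this forces $\cos\theta=0$, that is, $\theta=\pi/2$. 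This is consistent with the discussion of the totally real case in Section~\ref{sec:angles}: there $\mu=\pi/2$ and $\phi=\theta$.

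\emph{The side maps preserve $\mathcal P$.} The stabilizer of $\mathcal P$ in $\Pu$ contains the image of $\mathrm{SO}_0(2,1)$, the real matrices preserving $H_B$, which acts on $\mathcal P$ as the full group of orientation-preserving isometries. Its restricted metric has curvature $-1/4$. Inside the real hyperbolic plane $\mathcal P$ there is an isometry $\sigma_n\in\mathrm{SO}_0(2,1)$ preserving the oriented line $\vec L_n$ and carrying the oriented $\vec L_{n-1}$ to the oriented $\vec L_{n+1}$. It is the translation along $L_n$ by the signed length $\ell_n$. It carries the positive tangent of $L_{n-1}$ at $x_{n-1}$ to the positive tangent of $L_{n+1}$ at $x_n$ because both are perpendicular to $L_n$ within $\mathcal P$. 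The orientation sign needs checking here, and the signed convention \eqref{eq:signed-angle}, which for $\theta=\pi/2$ imposes no sign, must be matched with the real-hyperbolic orientation convention of \cite{TanWongZhang}. Regarded as an element of $\Pu$, $\sigma_n$ satisfies the defining properties of $\tau_n$. The uniqueness part of Lemma~\ref{thm:closure}, which rests on Lemma~\ref{lem:rigidity} and applies since $\theta>0$, then gives $\tau_n=\sigma_n$. So every $\tau_n$, and hence every $\eta_n$, lies in $\mathrm{SO}_0(2,1)$ and preserves $\mathcal P$.

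\emph{The closure identity.} The closure relation \eqref{eq:tau}, equivalently \eqref{eq:eta}, is then an identity among elements of $\mathrm{SO}_0(2,1)\subset\Pu$. This group embeds injectively in $\Pu$, since a real scalar cube root of unity equals $1$. The projective identity is therefore exactly the real hyperbolic closure identity in the Lorentz group. Through $\rho\circ\rho_1$ of Propositions~\ref{prop:realhypplane1} and~\ref{prop:realhypplane2}, it is the image of the $2\times2$ identity $A_1A_2A_3=\epsilon A_6^{-1}A_5^{-1}A_4^{-1}$, with the sign $\epsilon$ killed by $\rho$.

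\emph{Scaling, the main obstacle.} The remaining step is to track scalings. In $\mathrm{SO}_0(2,1)$ a translation by Lorentz parameter $2\lambda$ corresponds to $\lambda$ in $\mathrm{SU}(1,1)$, and the Bergman length along $\mathcal P$ is twice the curvature $-1$ length. This accounts for the factor $d_{\mathcal P}=2d_{-1}$. It amounts to comparing the eigenvalue $\lambda_n=e^{\ell_n/2}$ (here $\phi_n=0$, or $\pi$ for reflections within the normal direction, which are absorbed by the cube-root ambiguity) with the $\mathrm{SO}_0(2,1)$ eigenvalues $e^{\pm\ell_n/2},1$ of a real translation of Bergman length $\ell_n$, that is, of curvature $-1$ length $\ell_n/2$. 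I expect this bookkeeping of lengths and orientation signs to be the main, though routine, difficulty.
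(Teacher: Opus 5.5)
Your first step is the same as the paper's: tangent vectors to the totally real plane $\mathcal P$ have real Hermitian product, so a Riemannian right angle forces $\cos\theta=0$. The gap is in the second step, and it is more than bookkeeping.

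\textbf{The orientation sign is not determined.} At $\theta=\pi/2$ the condition \eqref{eq:signed-angle} is vacuous, so the six orientations are arbitrary. The translation along $L_n$ from $x_{n-1}$ to $x_n$ sends $u_{n-1}$ to a unit vector of $T_{x_n}\mathcal P$ perpendicular to $L_n$, that is, to $\pm u_{n+1}$. Perpendicularity within $\mathcal P$ does not fix the sign.

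Take a convex hexagon in $\mathcal P$ with its sides oriented cyclically. At $x_{n-1}$ the vector $u_n$ is $u_{n-1}$ turned a quarter turn to the left, and at $x_n$ the vector $u_{n+1}$ is $u_n$ turned a quarter turn to the left. The translation commutes with these rotations, so it sends $u_{n-1}$ to $-u_{n+1}$, on every side. The orientation-preserving isometries of $\mathcal P$ that preserve the oriented line $\vec L_n$ are exactly the translations along it. So here no element of $\mathrm{SO}_0(2,1)$ has the defining property of $\tau_n$, and your $\sigma_n$ does not exist.

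\textbf{What $\tau_n$ actually is.} It is the translation composed with the rotation by $\pi$ in the complex normal direction of $L_n^{\mathbb C}$. It preserves $\mathcal P$ but restricts to the reflection of $\mathcal P$ in $L_n$. Use the Siegel normalization of the proof of Theorem~\ref{thm:main}, where $Q(\pi/2)$ is real and $\mathcal P$ is the real Siegel plane. For the side along $\vec L_{[0,\infty]}$, this map is projectively $\operatorname{diag}(e^{\ell_n/2},-1,e^{-\ell_n/2})$. That is a real Lorentz matrix outside the identity component, with $\phi_n=\pi$ and $\lambda_n=e^{\ell_n/2}e^{-i\pi/3}$. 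Its ratio $e^{-i\pi/3}$ to the pure translation's parameter is not a cube root of unity. So, contrary to your parenthetical remark, this case is not absorbed by the choice of lift.

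Hence your conclusion that every $\tau_n$ and $\eta_n$ lies in $\mathrm{SO}_0(2,1)$ fails for such orientations. So does the identification with the image under $\rho\circ\rho_1$ of an $\mathrm{SL}(2,\mathbb R)$ identity. This is exactly the caveat in the paragraph after the corollary.

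\textbf{How the paper proceeds.} It never constructs $\tau_n$ inside $\mathcal P$. Two intersecting real geodesics of $\mathcal P$ determine $\mathcal P$ as the unique totally real, totally geodesic plane containing them. Now $\tau_n(\mathcal P)$ is such a plane, and it contains $\tau_n(\vec L_{n-1})=\vec L_{n+1}$ and $\tau_n(\vec L_n)=\vec L_n$. Hence $\tau_n(\mathcal P)=\mathcal P$, whatever the orientations.

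In ball coordinates where $\mathcal P$ is real, the stabilizer of $\mathcal P$ in $\Pu$ is the full projective real Lorentz group. So the side maps are real Lorentz matrices, possibly reversing the orientation of $\mathcal P$, and Lemma~\ref{thm:closure} gives their closure. The paper's proof claims only this, so ``projective Lorentz group'' in the statement should be read as the full group, not its identity component.

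If you replace your existence-plus-uniqueness step by this invariance argument, the rest of your outline goes through, including the scaling $d_{\mathcal P}=2d_{-1}$.
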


\begin{proof}
The tangent directions to \(\mathcal P\) at a right-angled
vertex are Hermitian orthogonal. This gives \(\theta=\pi/2\).
Two intersecting real geodesics in \(\mathcal P\) determine
that totally real plane. Since \(\tau_j\) preserves \(\vec{L}_j\)
and sends \(\vec{L}_{j-1}\) to \(\vec{L}_{j+1}\), it preserves \(\mathcal P\).
In ball coordinates of the complex hyperbolic plane in which \(\mathcal P\) is real, these
projective actions are represented by real Lorentz matrices.
Their product is the identity by Lemma~\ref{thm:closure},
giving the stated real hyperbolic interpretation.
\end{proof}

The maps \(\rho\) and
\(\rho_1\) in Propositions \ref{prop:realhypplane1} and \ref{prop:realhypplane2} describe the component preserving the orientation of
the real plane. Side maps of a right-angled hexagon reversing that orientation lie outside
this component and cannot be identified with images under \(\rho\).
The condition
\(\theta=\pi/2\) alone does not imply that all six sides lie in a
single totally real plane; see the Mostow example in
Section~\ref{sec:mostow-canonical-hexagon}.

\subsection{Right-angled hexagons in  the complex hyperbolic line}

In the ball models of the complex hyperbolic line and the complex hyperbolic plane,
the embedding of the isometry group of the complex hyperbolic line into that of
the complex hyperbolic plane which preserves the complex geodesic \(z_2=0\) is
\[
\rho_2:\mathrm{SU}(1,1)\longrightarrow\mathrm{SU}(2,1),\qquad
\begin{pmatrix}a&b\\\bar b&\bar a\end{pmatrix}
\longmapsto
\begin{pmatrix}a&0&b\\0&1&0\\\bar b&0&\bar a\end{pmatrix}.
\]
The \(2\times2\) block acts on the first and third coordinates, one
positive and one negative for \(H_B=\operatorname{diag}(1,1,-1)\).
Unlike the embedding \(\rho\) in Proposition~\ref{prop:realhypplane1},
this embedding does not send the central element of \(\mathrm{SU}(1,1)\)
to a scalar matrix in the ambient group:
\[
\rho_2(-I_2)=\operatorname{diag}(-1,1,-1),
\] where \(I_2\) is the \(2 \times 2\) identity matrix.
This matrix acts trivially on the complex geodesic but is not central
in \(\mathrm{SU}(2,1)\). Thus a \(2\times2\) closure identity with
right-hand side \(-I_2\) does not become the ambient central closure
identity under \(\rho_2\).

There are also geometric restrictions at \(\theta=0\).
The prescribed side maps may reverse the orientation of the complex
geodesic and hence fail to be holomorphic there. Even when such
holomorphic maps exist, their rotations in the ambient complex normal
direction are not determined by the side lines. The embedding above
therefore compares the matrix models but does not extend
Theorem~\ref{thm:main} to \(\theta=0\).

\section{An example: Mostow's canonical hexagon}
\label{sec:mostow-canonical-hexagon}

We test the Delambre--Gauss identity in
	Theorem \ref{thm:main} on the canonical right-angled  hexagon associated
with Mostow's groups~\cite{Mostow}, using the construction of
Deraux, Falbel, and Paupert~\cite[Sections~2.3--2.4]{DerauxFalbelPaupert}.

Let \(p\in\{3,4,5\}\). Put
\[
\alpha=\frac{1}{2\sin(\pi/p)},\qquad
\eta=e^{\pi i/p},\qquad \phi=e^{\pi it/3},
\]
and use the Hermitian matrix \(H\), and the 
\(\mathbb{Z}_3\)-symmetry given by \(J\):
\[
H=\begin{pmatrix}
1&-\alpha\bar\phi&-\alpha\phi\\
-\alpha\phi&1&-\alpha\bar\phi\\
-\alpha\bar\phi&-\alpha\phi&1
\end{pmatrix},\qquad
J=\begin{pmatrix}0&0&1\\1&0&0\\0&1&0\end{pmatrix}.
\]
This is the matrix for the convention
\(\langle z,w\rangle=w^*Hz\) used throughout this section.
Although \(H\) has signature \((2,1)\) for
\(|t|<3(1/2-1/p)\), the six vertices used here are interior points
only in the smaller range
\begin{equation}\label{eq:mostow-range}
|t|<\frac12-\frac1p.
\end{equation}
We impose \eqref{eq:mostow-range} throughout this section.

The three complex reflections \(R_1,R_2,R_3\) of order \(p\)
in the Mostow configuration satisfy
\(R_iR_jR_i=R_jR_iR_j\).
Write \(p_{12}\) for the intersection of the mirrors of \(R_1,R_2\),
and set \(p_{23}=Jp_{12}\), \(p_{31}=J^2p_{12}\).
We use the lifts
\[
\mathbf p_{12}=
\begin{pmatrix}
\alpha\phi+\alpha^2\bar\phi^2\\
\alpha\bar\phi+\alpha^2\phi^2\\
1-\alpha^2
\end{pmatrix},\qquad
\mathbf p_{23}=J {\mathbf p}_{12},\qquad {\mathbf p}_{31}=J^2 {\mathbf p}_{12}.
\]
Their norms agree because \(J\) is unitary with respect to the matrix $H$.
The bisector \(B_1\) is equidistant from \(p_{12}\) and \(p_{31}\);
put \(B_2=J(B_1)\), \(B_3=J^2(B_1)\).
Their common intersection
\[
S_\eta=B_1\cap B_2
\]
is a smooth, non-totally-geodesic Giraud disk.
Proposition~2.12 of~\cite{DerauxFalbelPaupert} identifies six
real geodesic segments in $S_\eta$ forming a right-angled hexagon.
The region bounded by these segments is the hexagonal \(2\)-face
used in that construction.

\subsection{Recovering the vertices from polar vectors}

We recall the calculation of~\cite[Section~2.4]{DerauxFalbelPaupert}.
The extension of \(S_\eta\) to projective space is
\[
\widetilde S_\eta
=\{[x]\in\mathbb{CP}^2:
 |\langle x, {\mathbf p}_{12}\rangle|
 =|\langle x, {\mathbf p}_{23}\rangle|
 =|\langle x, {\mathbf p}_{31}\rangle|\}.
\]
It is a torus. Let \(\widetilde\sigma_j\) be the projective extension
of the real spine of \(B_j\). The two slices of \(B_j\) that meet
\(S_\eta\) in geodesics have polar points at
\(\widetilde\sigma_j\cap\widetilde S_\eta\).
For \(j=2\), lifts of these points are
\[
\mathbf v_{123}=\begin{pmatrix}-\eta i\bar\phi\\1\\\bar\eta i\phi\end{pmatrix},
\qquad
\mathbf v_{321}=\begin{pmatrix}\bar\eta i\bar\phi\\1\\-\eta i\phi\end{pmatrix}.
\]
The other four vectors are obtained by applying \(J\) and cycling
the indices. In particular, \(\mathbf v_{312}=J^2 {\mathbf v}_{123}\) and
\(\mathbf v_{213}=J^2 {\mathbf v}_{321}\).

The relation \(\alpha=i/(\eta-\bar\eta)\) gives
\[
\begin{aligned}
\mathbf p_{12}+\eta i\phi\, {\mathbf p}_{23}
 &=\alpha^2(\eta^2\phi^2+\bar\eta i\bar\phi)\, {\mathbf v}_{123},\\
\mathbf p_{12}-\bar\eta i\phi\, {\mathbf p}_{23}
 &=\alpha^2(\bar\eta^2\phi^2-\eta i\bar\phi)\, {\mathbf v}_{321}.
\end{aligned}
\]
The coefficients of \(\mathbf p_{23}\) on the left have modulus \(1\).
These formulas place the two projective points on
\(\widetilde\sigma_2\).
Next put
\[
\kappa=\det H=1-3\alpha^2-\alpha^3(\phi^3+\bar\phi^3).
\]
Direct multiplication gives \(H {\mathbf p}_{12}=\kappa(0,0,1)^{\top}\);
cycling coordinates gives the corresponding formulas for
\(H {\mathbf p}_{23}\) and \(H {\mathbf p}_{31}\).
With our convention of linearity in the first variable, it follows that
\begin{align*}
\langle {\mathbf v}_{123}, {\mathbf p}_{12}\rangle&=\bar\eta i\phi\,\kappa, &
\langle {\mathbf v}_{123}, {\mathbf p}_{23}\rangle&=-\eta i\bar\phi\,\kappa, &
\langle {\mathbf v}_{123}, {\mathbf p}_{31}\rangle&=\kappa,\\
\langle {\mathbf v}_{321}, {\mathbf p}_{12}\rangle&=-\eta i\phi\,\kappa, &
\langle {\mathbf v}_{321}, {\mathbf p}_{23}\rangle&=\bar\eta i\bar\phi\,\kappa, &
\langle {\mathbf v}_{321}, {\mathbf p}_{31}\rangle&=\kappa.
\end{align*}
For each of the two functionals, the three values have constant modulus,
so both polar points belong to \(\widetilde S_\eta\). The remaining points
satisfy the same conditions by the symmetry \(J\).

These vectors are polar to the mirrors of the complex reflections
\(J^{\pm1}R_jR_k\) described in~\cite[Section~2.4]{DerauxFalbelPaupert}.
Their relevant orthogonality relations are
\[
\langle {\mathbf v}_{321}, {\mathbf v}_{312}\rangle=0,\qquad
\langle {\mathbf v}_{123}, {\mathbf v}_{213}\rangle=0,
\]
together with their cyclic images.
For \eqref{eq:mostow-range}, all six polar vectors are positive.
Each displayed orthogonal pair therefore spans a positive definite
plane, whose orthogonal complement is a negative line.
The corresponding vertices in \(\HdC\) are
\[
s_{12}=v_{312}^{\perp}\cap v_{321}^{\perp},\qquad
\tilde s_{21}=v_{123}^{\perp}\cap v_{213}^{\perp}.
\]
Here intersections denote their projective points.

Explicitly,
\[
s_{12}=[\bar{\eta}i\phi,\;\bar{\eta}i\bar{\phi},\;1]^{\top},
\qquad
\tilde{s}_{21}=[-\eta i\phi,\;-\eta i\bar{\phi},\;1]^{\top}.
\]
The remaining four vertices are obtained by applying the symmetry \(J\):
\[
s_{23}=J(s_{12}),\qquad
s_{31}=J(s_{23}),\qquad
\tilde{s}_{32}=J(\tilde{s}_{21}),\qquad
\tilde{s}_{13}=J(\tilde{s}_{32}).
\]
Thus the six vertices, in cyclic order, are
\[
s_{12},\; \tilde{s}_{13},\; s_{23},\; \tilde{s}_{21},\; s_{31},\; \tilde{s}_{32}.
\]

We orient the six complete side lines in this cyclic order:
\[
\begin{array}{lll}
\vec{L}_1:s_{12}\longrightarrow\tilde s_{13},&
\vec{L}_2:\tilde s_{13}\longrightarrow s_{23},&
\vec{L}_3:s_{23}\longrightarrow\tilde s_{21},\\
\vec{L}_4:\tilde s_{21}\longrightarrow s_{31},&
\vec{L}_5:s_{31}\longrightarrow\tilde s_{32},&
\vec{L}_6:\tilde s_{32}\longrightarrow s_{12}.
\end{array}
\]
For this choice all six signed side lengths are positive.
The arrows in Figure~\ref{fig:mostow-canonical-hexagon} record these
orientations.

The complex geodesics containing successive sides are orthogonal
\cite[Proposition~2.12]{DerauxFalbelPaupert}. Hence the six
Riemannian angles and the six complex angles all equal \(\pi/2\).
The orientation condition \eqref{eq:signed-angle} is automatic.
Moreover, every pair of consecutive real sides is contained in a
totally real plane that is a meridian of one of the bisectors
\(B_j\), as shown in~\cite[Proposition~2.13]{DerauxFalbelPaupert}.
This is a statement about each adjacent pair; it does not assert that a
single totally real plane contains the whole hexagon.

\begin{rem}
The points \([{\mathbf v}_{ijk}]\) are positive projective points and are
not vertices in complex hyperbolic space. Their polar complex
geodesics determine the negative points \(s_{ij},\tilde s_{ij}\).
Thus \(B_1\cap B_2\cap B_3=S_\eta\) is the disk containing the
hexagon, while the particular vertices are selected by the
projective extension and polarity calculation above.
\end{rem}

\begin{figure}[htbp]
    \centering
    \tikzset{every picture/.style={line width=0.75pt}}
    \begin{tikzpicture}[x=0.75pt, y=0.75pt, yscale=-1, xscale=1]
    \draw [line width=1.5] (148.74,254.86) .. controls (125.87,187.43) and (179.28,107.5) .. (268.05,76.35)
        .. controls (356.82,45.19) and (447.33,74.6) .. (470.21,142.03)
        .. controls (493.09,209.47) and (439.67,289.39) .. (350.9,320.55)
        .. controls (262.13,351.7) and (171.62,322.3) .. (148.74,254.86) -- cycle;
    \draw [->,line width=1.5] (381.64,144.77) -- (287.31,142.43);
    \draw [->,line width=1.5] (235.54,253.6) -- (310.31,254.77);
    \draw [->,line width=1.5] (234.39,200.94) -- (235.54,253.6);
    \draw [->,line width=1.5] (381.64,178.7) -- (381.64,144.77);
    \draw [->,line width=1.5] (287.31,142.43) -- (234.39,200.94);
    \draw [->,line width=1.5] (310.31,254.77) -- (381.64,178.7);
    \draw (380,121.4) node [anchor=north west][inner sep=0.75pt] {$s_{12}$};
    \draw (211,178.4) node [anchor=north west][inner sep=0.75pt] {$s_{23}$};
    \draw (312.31,258.17) node [anchor=north west][inner sep=0.75pt] {$s_{31}$};
    \draw (275,113.4) node [anchor=north west][inner sep=0.75pt] {$\tilde{s}_{13}$};
    \draw (218,254.4) node [anchor=north west][inner sep=0.75pt] {$\tilde{s}_{21}$};
    \draw (387,177.4) node [anchor=north west][inner sep=0.75pt] {$\tilde{s}_{32}$};
    \end{tikzpicture}
    \caption{Mostow's canonical right-angled hexagon with the chosen side orientations}
    \label{fig:mostow-canonical-hexagon}
\end{figure}

\subsection{The Delambre--Gauss formulas for Mostow's hexagons}
 
For  a Mostow's hexagon the complex angle at every vertex is \(\pi/2\).
We therefore set \(\theta=\pi/2\) in the matrix \(Q\) and verify the matrix relation
\[
A_1A_2A_3=A_6^{-1}A_5^{-1}A_4^{-1},
\]
using the matrices of  Theorem \ref{thm:main}.

For a concrete check, take \(p=4\), \(t=1/10\), with the orientations
specified above. The symmetry \(J\) gives equal parameters on the
three odd sides and on the three even sides.

The distances and normal rotations of the side maps give, for
\(j=2,4,6\),
\[
\ell_j=1.635297750842,\qquad \phi_j=2.905973204571,
\]
and, for \(j=1,3,5\),
\[
\ell_j=3.545892107956,\qquad \phi_j=-2.591813939212.
\]
Hence, using \(\lambda_j=e^{\ell_j/2-i\phi_j/3}\),
\[
\lambda_1=\lambda_3=\lambda_5
 \simeq3.824063576608+4.477403179458i,
\]
and
\[
\lambda_2=\lambda_4=\lambda_6
 \simeq1.283005214949-1.866784171584i.
\]

For reproducibility, the unrounded eigenvalues in this example are
\[
\begin{aligned}
\lambda_{1}=\lambda_{3}=\lambda_{5}
 &=\left(C_o+\sqrt{C_o^2-1}\right)e^{11\pi i/40},
 &C_o&=\frac{\cos(\pi/4)}{\sin(3\pi/40)},\\
\lambda_{2}=\lambda_{4}=\lambda_{6}
 &=\left(C_e+\sqrt{C_e^2-1}\right)e^{-37\pi i/120},
 &C_e&=\frac{\cos(\pi/4)}{\sin(7\pi/40)}.
\end{aligned}
\]
The even rotation angle in the table uses the representative in
\((-\pi,\pi]\); the corresponding lift is used in both products.

\begin{samepage}
Multiplying the two sides separately gives, to six decimal places,
{\footnotesize
\[
\begin{aligned}
A_1A_2A_3&\simeq
\begin{pmatrix}
2.004600+7.481270i&-0.982619-3.667184i&-0.923844-0.715781i\\
-0.982619-3.667184i&-0.384363+1.297588i&0.028342+0.105772i\\
-0.923844-0.715781i&0.028342+0.105772i&0.001668+0.006224i
\end{pmatrix},\\[2mm]
A_6^{-1}A_5^{-1}A_4^{-1}&\simeq
\begin{pmatrix}
2.004600+7.481270i&-0.982619-3.667184i&-0.923844-0.715781i\\
-0.982619-3.667184i&-0.384363+1.297588i&0.028342+0.105772i\\
-0.923844-0.715781i&0.028342+0.105772i&0.001668+0.006224i
\end{pmatrix}.
\end{aligned}
\]
}
With unrounded input, the maximum entrywise difference is about
\(1.2\times10^{-14}\) in double precision. This is a direct numerical
check of the matrix identity in Theorem \ref{thm:main} for the stated example, rather than a
proof for the whole parameter family.
\end{samepage}

\end{document}